\newtheorem{theorem}{Theorem}[section]
\newtheorem{lemma}[theorem]{Lemma}
\newtheorem{corollary}[theorem]{Corollary}
\theoremstyle{definition}
\newtheorem{remark}[theorem]{Remark}
\numberwithin{equation}{section}
\author[G. Hu]{Guoen Hu}
\address{Guoen Hu, School of Applied Mathematics, Beijing Normal University, Zhuhai 519087,
P. R. China}
\email{guoenxx@163.com}
\thanks{}
\keywords{weighted bound, Calder\'on-Zygmund operator, bi-sublinear sparse operator, grand maximal operator}
\subjclass{42B20, 42B25, 47B33}
\begin{document}

\title[Calder\'on-Zygmund operator]{Weighted weak type endpoint estimates for the compositions of Calder\'on-Zygmund operators}

\begin{abstract}
Let $T_1$, $T_2$ be two Calder\'on-Zygmund operators and $T_{1,\,b}$ be the commutator of $T_1$ with symbol $b\in {\rm BMO}(\mathbb{R}^n)$. In this paper, by establishing bilinear sparse   dominations, the author proves that composite operator $T_1T_2$ satisfies the following estimate: for $\lambda>0$ and weight $w\in A_1(\mathbb{R}^n)$,
\begin{eqnarray*}&&w\big(\{x\in\mathbb{R}^n:\,|T_{1} T_2f(x)|>\lambda\}\big)\\
&&\quad\lesssim [w]_{A_1}[w]_{A_{\infty}}\log ({\rm e}+[w]_{A_{\infty}}\big)
\int_{\mathbb{R}^n}\frac{|f(x)|}{\lambda}\log \Big({\rm e}+\frac{|f(x)|}{\lambda}\Big)w(x)dx,\nonumber
\end{eqnarray*}
while the composite operator $T_{1,b}T_2$ satisfies that
\begin{eqnarray*}&&w\big(\{x\in\mathbb{R}^n:\,|T_{1,b} T_2f(x)|>\lambda\}\big)\\
&&\quad\lesssim [w]_{A_1}[w]_{A_{\infty}}\log^2 ({\rm e}+[w]_{A_{\infty}}\big)
\int_{\mathbb{R}^n}\frac{|f(x)|}{\lambda}\log^2 \Big({\rm e}+\frac{|f(x)|}{\lambda}\Big)w(x)dx.
\end{eqnarray*}
\end{abstract}

\maketitle

\section{Introduction}
We will work on $\mathbb{R}^n$, $n\geq 1$. Let $p\in [1,\,\infty)$ and $w$ be a nonnegative, locally integrable function on $\mathbb{R}^n$. We say that   $w\in A_{p}(\mathbb{R}^n)$ if the $A_p$ constant $[w]_{A_p}$ is finite, where
$$[w]_{A_p}:=\sup_{Q}\Big(\frac{1}{|Q|}\int_Qw(x)dx\Big)\Big(\frac{1}{|Q|}\int_{Q}w^{-\frac{1}{p-1}}(x)dx\Big)^{p-1}<\infty,\,\,\,p\in (1,\,\infty),$$
the  supremum is taken over all cubes in $\mathbb{R}^n$, and
$$[w]_{A_1}:=\sup_{x\in\mathbb{R}^n}\frac{Mw(x)}{w(x)},$$
For properties of $A_p(\mathbb{R}^n)$, we refer the reader to the  monograph  \cite{gra}. In the last two decades, considerable attention has been paid to the sharp weighted bounds with $A_p$ weights for the classical operators in harmonic analysis. A prototypical work in this area is
Buckley's paper \cite{bu}, in which it was proved that if $p\in (1,\,\infty)$ and $w\in A_{p}(\mathbb{R}^n)$, then the Hardy-Littlewood maximal operator $M$ satisfies
\begin{eqnarray}\label{equ:1.1}\|Mf\|_{L^{p}(\mathbb{R}^n,\,w)}\lesssim_{n,\,p}[w]_{A_p}^{\frac{1}{p-1}}\|f\|_{L^{p}(\mathbb{R}^n,\,w)}.\end{eqnarray}
Moreover, the estimate (\ref{equ:1.1}) is sharp in the sense that the exponent $1/(p-1)$ can not be replaced by a smaller one. Hyt\"onen and P\'erez \cite{hp} improved   estimate (\ref{equ:1.1}), and showed that
\begin{eqnarray}\label{equ:1.2}\|Mf\|_{L^{p}(\mathbb{R}^n,\,w)}\lesssim_{n,\,p}
\big([w]_{A_p}[w^{-\frac{1}{p-1}}]_{A_{\infty}}\big)^{\frac{1}{p}}\|f\|_{L^{p}(\mathbb{R}^n,\,w)},\end{eqnarray}
where and in the following, for a weight $u\in A_{\infty}(\mathbb{R}^n)=\cup_{p\geq 1}A_p(\mathbb{R}^n)$, $[u]_{A_{\infty}}$ is the $A_{\infty}$ constant of $u$, defined by
$$[u]_{A_{\infty}}=\sup_{Q\subset \mathbb{R}^n}\frac{1}{u(Q)}\int_{Q}M(u\chi_Q)(x)dx,$$
see \cite{wil}.

Let $K$ be a locally integrable function on $\mathbb{R}^n\times \mathbb{R}^n\backslash \{(x,\,y):\,x\not =y\}$. We say that $K$ is a Calder\'on-Zygumnd kernel, if $K$ satisfies the size condition that for $x,\,y\in\mathbb{R}^n$, $x\not =y$,
$$|K(x,\,y)|\lesssim |x-y|^{-n},\,\,\hbox{if}\,\,\,x\not =y,$$
and the regularity condition that for any $x,\,y,\,y'\in \mathbb{R}^n$ with $|x-y|\geq 2|y-y'|$,
\begin{eqnarray}\label{eq1.verepsilon}|K(x,\,y)-K(x,\,y')|+|K(y,\,x)-K(y',\,x)|\lesssim \frac{|y-y'|^{\varepsilon}}{|x-y|^{n+\varepsilon}},\end{eqnarray}
where $\varepsilon\in (0,\,1]$ is a constant. An  linear operator $T$ is said to be a Calder\'on-Zygmund operator with kernel $K$ if
it is bounded on $L^2(\mathbb{R}^n)$, and satisfies that
\begin{eqnarray}\label{equ:1.3}Tf(x)=\int_{\mathbb{R}^n}K(x,\,y)f(y)dy,\end{eqnarray}
for all $f\in L^2(\mathbb{R}^n)$ with compact support and a. e. $x\in\mathbb{R}^n\backslash {\rm supp}\, f$. Hyt\"onen \cite{hyt}
proved that  for a  Calder\'on-Zygmund operator $T$ and $w\in A_2(\mathbb{R}^n)$,
\begin{eqnarray}\label{equ:1.4}\|Tf\|_{L^{2}(\mathbb{R}^n,\,w)}\lesssim_{n}[w]_{A_2}\|f\|_{L^{2}(\mathbb{R}^n,\,w)}.\end{eqnarray}
This solved  the so-called $A_2$ conjecture. Hyt\"onen and Lacey \cite{hyla} improved the estimate (\ref{equ:1.4}) and proved that
for a Calder\'on-Zygmund operator $T$, $p\in (1,\,\infty)$ and $w\in A_p(\mathbb{R}^n)$,
\begin{eqnarray}\label{equ:1.5}\|Tf\|_{L^{p}(\mathbb{R}^n,\,w)}\lesssim_{n,\,p}[w]_{A_p}^{\frac{1}{p}} \big([w]_{A_{\infty}}^{\frac{1}{p'}}+[\sigma]_{A_{\infty}}^{\frac{1}{p}}\big)\|f\|_{L^{p}(\mathbb{R}^n,\,w)}.\end{eqnarray}
Here and in the following, for $p\in (1,\,\infty)$ and $w\in A_p(\mathbb{R}^n)$, $p'=p/(p-1)$, $\sigma=w^{-\frac{1}{p-1}}$.  Hyt\"onen and P\'erez \cite{hp2}  proved that
if  $T$ is a Calder\'on-Zygmund operator and $w\in A_1(\mathbb{R}^n)$, then
$$\|Tf\|_{L^{1,\,\infty}(\mathbb{R}^n,\,w)}\lesssim [w]_{A_1}\log ({\rm e}+[w]_{A_{\infty}}\big)
\|f\|_{L^1(\mathbb{R}^n,\,w)}.$$
For other  works about the quantitative weighted bounds for Calder\'on-Zygmund operators, see \cite{hlp,hp,ler1, ler2, ler3} and the related references therein.

Let $T_1$, $T_2$ be two Calder\'on-Zygmund operators, and $T_2^*$ be the adjoint operator of $T_2$. It was pointed out in \cite[Section 9]{cm97} that if $T_1(1)=T_2^*(1)=0$, then the composite operator $T_1 T_2$ is also a Calder\'on-Zygmund operator, thus for $p\in (1,\,\infty)$ and $w\in A_p(\mathbb{R}^n)$,
$$\|T_1 T_2f\|_{L^{p}(\mathbb{R}^n,\,w)}\lesssim [w]_{A_p}^{\frac{1}{p}} \big([w]_{A_{\infty}}^{\frac{1}{p'}}+[\sigma]_{A_{\infty}}^{\frac{1}{p}}\big)\|f\|_{L^{p}(\mathbb{R}^n,\,w)}.
$$
Benea and Bernicot \cite{bb} considered the weighted bounds for $T_1 T_2$ when $T_1(1)$ or $T_2^*(1)=0$. In fact, the results in \cite{bb}  implies the following conclusion (see  Remark \ref{r3.2} in Section 3).
\begin{theorem}\label{thm1.0}
Let $T_1$, $T_2$ be two Calder\'on-Zygmund operators, $p\in (1,\,\infty)$ and $w\in A_p(\mathbb{R}^n)$.
\begin{itemize}
\item[\rm (i)] If $T_1(1)=0$, then
$$\|T_1 T_2f\|_{L^{p}(\mathbb{R}^n,\,w)}\lesssim_{n,\,p}[w]_{A_p}^{\frac{1}{p}} \big([w]_{A_{\infty}}^{\frac{1}{p'}}+[\sigma]_{A_{\infty}}^{\frac{1}{p}}\big)[\sigma]_{A_{\infty}}\|f\|_{L^{p}(\mathbb{R}^n,\,w)};
$$
\item[\rm (ii)] if $T_2^*(1)=0$, then
$$\|T_1 T_2f\|_{L^{p}(\mathbb{R}^n,\,w)}\lesssim_{n,\,p}[w]_{A_p}^{\frac{1}{p}} \big([w]_{A_{\infty}}^{\frac{1}{p'}}+[\sigma]_{A_{\infty}}^{\frac{1}{p}}\big)[w]_{A_{\infty}}\|f\|_{L^{p}(\mathbb{R}^n,\,w)}.
$$
\end{itemize}
\end{theorem}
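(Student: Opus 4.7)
The plan is to derive Theorem~\ref{thm1.0} from a bi-sublinear sparse form bound for the composition $T_1T_2$, following the framework of Benea and Bernicot, and then apply the standard quantitative weighted theory of sparse operators.

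\textbf{Step 1 (asymmetric bilinear sparse domination).} Using a Lerner-style stopping-time argument built on the grand maximal truncation of the composition, I would show that when $T_1(1)=0$, for all nice $f,g$ there exists a sparse family $\mathcal{S}$ such that
\begin{equation*}
|\langle T_1T_2 f,g\rangle|\lesssim \sum_{Q\in \mathcal{S}} \langle f\rangle_{1,Q}\,\langle g\rangle_{s,Q}\,|Q|
\end{equation*}
for any fixed $s>1$, with the implicit constant deteriorating as $s\to 1^+$. The mechanism is that the outer factor $T_1$ has cancellation, so its kernel regularity is available in the local oscillation estimate, whereas the inner factor $T_2$ has to be absorbed through its $L^s\to L^s$ boundedness, costing one extra unit of integrability on the $g$-side. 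For the dual case $T_2^*(1)=0$, writing $\langle T_1T_2 f,g\rangle=\langle f,T_2^*T_1^*g\rangle$ transfers the cancellation to the factor acting on $g$, and the same argument yields
\begin{equation*}
|\langle T_1T_2 f,g\rangle|\lesssim \sum_{Q\in \mathcal{S}} \langle f\rangle_{s,Q}\,\langle g\rangle_{1,Q}\,|Q|.
\end{equation*}

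\textbf{Step 2 (quantitative weighted bound).} With the sparse form in hand, by duality it suffices to bound it against $g\in L^{p'}(\sigma)$. The $L^s$ average on the $g$-side is absorbed by the sharp reverse H\"older inequality for $\sigma\in A_\infty$, which guarantees that $s$ may be chosen as $1+\tau$ with $\tau\asymp 1/[\sigma]_{A_\infty}$ at a cost of a factor $[\sigma]_{A_\infty}$. With this choice, the remaining form is precisely the standard $(\langle f\rangle_{1,Q},\langle g\rangle_{1,Q})$-sparse operator, which is controlled by the Hyt\"onen--Lacey estimate (\ref{equ:1.5}). Multiplying the two contributions yields (i). For (ii) the roles of $w$ and $\sigma$ are swapped: the reverse H\"older inequality is applied to $w\in A_\infty$, producing an extra factor $[w]_{A_\infty}$ instead.

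\textbf{Main obstacle.} The essential difficulty is Step~1. Since only one of $T_1(1)$, $T_2^*(1)$ vanishes, the kernel of $T_1T_2$ is not of standard Calder\'on--Zygmund type on both variables, so the Lerner sparse domination for CZO cannot be applied directly to the composition. The Benea--Bernicot argument circumvents this by invoking the kernel regularity only for the factor with cancellation and handling the other factor through its $L^s$ boundedness; this asymmetric treatment is precisely what forces the exponent $s>1$ on one side of the sparse form and, via the reverse H\"older step, produces the extra $[\sigma]_{A_\infty}$ (respectively $[w]_{A_\infty}$) factor in the two cases of the theorem.
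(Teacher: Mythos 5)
Your overall strategy---bilinear sparse domination followed by reverse H\"older absorption and the $A_p$--$A_\infty$ sparse estimate---is in the right spirit and close to how the paper proceeds in Remark~\ref{r3.2}. However, Step~1 assigns the extra integrability to the \emph{wrong side}, and this propagates into Step~2 and would produce the wrong $A_\infty$ factor.

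When $T_1(1)=0$, the Benea--Bernicot mechanism (Proposition~9 of \cite{bb}, which is precisely what the paper invokes) gives the \emph{pointwise} bound $|T_1T_2f(x)|\lesssim\sum_{Q\in\mathcal{S}}{\rm osc}_Q(T_2f)\chi_Q(x)$, and a standard Calder\'on--Zygmund computation controls ${\rm osc}_Q(T_2f)$ by $\|f\|_{L\log L,\,cQ}$ plus a geometrically decaying tail of $L^1$-averages of $f$. The integrability cost therefore lands on $f$: pairing with $g$ gives a sparse form of type $\sum_{Q}\|f\|_{L\log L,\,Q}\langle g\rangle_{1,Q}|Q|$, i.e.\ (up to a factor $s'$) $\sum_Q\langle f\rangle_{s,Q}\langle g\rangle_{1,Q}|Q|$ with the higher exponent on the $f$-side. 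Your Step~1 asserts $\sum_Q\langle f\rangle_{1,Q}\langle g\rangle_{s,Q}|Q|$ for $T_1(1)=0$; that is the form that arises in the \emph{dual} case $T_2^*(1)=0$ (apply the same mechanism to $T_2^*T_1^*$, whose outer factor is $T_2^*$). You have cases (i) and (ii) swapped. The heuristic you offer---that the inner factor $T_2$ must be absorbed through its $L^s$ boundedness, ``costing one extra unit of integrability on the $g$-side''---does not track: $T_2$ acts on $f$, so any integrability lost to the lack of kernel regularity on the $T_2$ factor is charged to $f$, not $g$.

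The swap is not harmless, because the side of the extra integrability determines which $A_\infty$ constant appears. As (\ref{en3.2}) records, the $L(\log L)$-sparse operator applied to $f$ and measured in $L^p(w)$ costs exactly $[\sigma]_{A_\infty}$; by the $p\leftrightarrow p'$, $w\leftrightarrow\sigma$ symmetry, an $L(\log L)$ (or $L^s$) average on the $g$-side tested in $L^{p'}(\sigma)$ costs $[\sigma^{-1/(p'-1)}]_{A_\infty}=[w]_{A_\infty}$. Concretely, to handle $\langle g\rangle_{s,Q}$ with $g\in L^{p'}(\sigma)$ one expands $|g|^s$ against the dual weight, and the power of the weight that appears is a power of $\sigma^{-1/(p'-1)}=w$; thus the reverse H\"older you need is for $w$, not $\sigma$. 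Carrying your Step~1 form through a correct Step~2 would therefore give $[w]_{A_\infty}$ in case~(i) and $[\sigma]_{A_\infty}$ in case~(ii)---the opposite of Theorem~\ref{thm1.0}. If you reverse the sides in Step~1, your argument essentially reduces to the paper's: pointwise domination (\ref{en3.11}), the $L^1$-sparse treatment of the geometric tail operator $G$ via (\ref{en3.12}), and the weighted sparse estimates (\ref{en3.2}) and (\ref{en3.13}).
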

Fairly recently, Hu \cite{hu1} considered the quantitative weighted bounds for the composite operator $T_{1,b}T_2$, with $T_1,\,T_2$ two Calder\'on-Zygmund operators, $b\in {\rm BMO}(\mathbb{R}^n)$, $T_{1,b}$  the commutator of $T_1$  defined by
$$T_{1,b}f(x)=b(x)T_1f(x)-T_1(bf)(x),
$$see \cite{cpp,hp,ler3} for the quantitative weighted bounds of commutator of Calder\'on-Zygmund operators. Employing the ideas of Lerner \cite{ler4},  Hu \cite{hu1} proved that \begin{eqnarray}\label{equ:1.8}\|T_{1,b}  T_2f\|_{L^p(\mathbb{R}^n,\,w)}&\lesssim &\|b\|_{{\rm BMO}(\mathbb{R}^n)}[w]_{A_p}^{\frac{1}{p}} \big([w]_{A_{\infty}}^{\frac{1}{p'}}+[\sigma]_{A_{\infty}}^{\frac{1}{p}}\big)\\
&&\quad\times\big([w]_{A_{\infty}}+[\sigma]_{A_{\infty}}\big)^2
\big\|f\big\|_{L^p(\mathbb{R}^n,\,w)}.\nonumber
\end{eqnarray}
We remark that the operator $T_{1,b}T_2$ was introduced by Krantz and Li \cite{krali} in the study of the Toeplitz type operator
of singular integral operators.

The main purpose of this paper is to establish the weighted weak type endpoint estimate for the composite operators $T_1 T_2$ and  $T_{1,\,b} T_2$. Our main result can be stated as follows.

\begin{theorem}\label{thm1.1}Let $T_1$ and $T_2$  be Calder\'on-Zygmund operators.  Then
for $w\in A_{1}(\mathbb{R}^n)$ and $\lambda>0$,
\begin{eqnarray}\label{eq1.8}&&w\big(\{x\in\mathbb{R}^n:\,|T_{1} T_2f(x)|>\lambda\}\big)\\
&&\quad\lesssim [w]_{A_1}[w]_{A_{\infty}}\log ({\rm e}+[w]_{A_{\infty}}\big)
\int_{\mathbb{R}^n}\frac{|f(x)|}{\lambda}\log \Big({\rm e}+\frac{|f(x)|}{\lambda}\Big)w(x)dx.\nonumber
\end{eqnarray}
Moreover, if $T_1(1)=0$, then
\begin{eqnarray}\label{eq1.81}&&w\big(\{x\in\mathbb{R}^n:\,|T_{1} T_2f(x)|>\lambda\}\big)\\
&&\quad\lesssim [w]_{A_1}\log^2 ({\rm e}+[w]_{A_{\infty}}\big)
\int_{\mathbb{R}^n}\frac{|f(x)|}{\lambda}\log \Big({\rm e}+\frac{|f(x)|}{\lambda}\Big)w(x)dx.\nonumber
\end{eqnarray}
\end{theorem}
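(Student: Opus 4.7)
The plan is to derive Theorem~\ref{thm1.1} from bi-sublinear sparse dominations for the composite operator $T_1T_2$, and then pass from the sparse form to the weighted weak-type endpoint estimate. Since $T_1T_2$ is in general not a Calder\'on-Zygmund operator (its kernel lacks the standard smoothness without cancellation), one cannot expect a pointwise sparse bound by $L^1$ averages. Instead, for arbitrary $T_1,T_2$ I aim for a bi-sublinear domination of the form
\begin{equation*}
\bigl|\langle T_1T_2f,\,g\rangle\bigr|\lesssim \sum_{Q\in\mathcal{S}}\|f\|_{L\log L,\,Q}\|g\|_{L\log L,\,Q}|Q|,
\end{equation*}
for some sparse family $\mathcal{S}$, where the Luxemburg averages arise because the grand maximal truncation associated with $T_1T_2$ is pointwise controlled only by an Orlicz maximal function of $L\log L$ type, rather than by the usual $M$. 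When $T_1(1)=0$, the extra cancellation permits a more delicate decomposition of the composed kernel that lets one replace the Orlicz norm on the $g$-side by a plain average, yielding
\begin{equation*}
\bigl|\langle T_1T_2f,\,g\rangle\bigr|\lesssim \sum_{Q\in\mathcal{S}}\|f\|_{L\log L,\,Q}\langle g\rangle_Q|Q|.
\end{equation*}

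The construction of $\mathcal{S}$ follows Lerner's scheme: iterate stopping times based on the relevant Orlicz maximal function together with a localized oscillation estimate of the form $\int_Q|T_1T_2(f\chi_Q)|\,dx\lesssim \|f\|_{L\log L,\,Q}|Q|$. Once the sparse form is in hand, the weighted weak-type estimate is obtained by testing against $g=w\chi_E/w(E)$ with $E\subset\{|T_1T_2f|>\lambda\}$: the $A_1$ condition replaces $w(Q)$ by $|Q|\inf_Qw$, and the remaining sums of the form $\sum_Q\|h\|_{L\log L,\,Q}|Q|\inf_Qw$ are then controlled by Orlicz versions of Hyt\"onen--P\'erez-type $A_1$-$A_{\infty}$ inequalities, contributing a factor of $[w]_{A_\infty}\log({\rm e}+[w]_{A_\infty})$ per $L\log L$ average and only $\log({\rm e}+[w]_{A_\infty})$ per $L^1$ average. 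The two estimates in Theorem~\ref{thm1.1} then correspond exactly to the presence of two $L\log L$ averages (general case) or one $L\log L$ average paired with one $L^1$ average (the case $T_1(1)=0$).

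The main obstacle will be establishing the bi-sublinear sparse dominations themselves, especially the improved form under $T_1(1)=0$. Controlling the grand maximal truncation of $T_1T_2$ on a cube $Q$ requires splitting $T_2f$ into its local part $T_2(f\chi_{Q^{*}})$ and its nonlocal part, and then analyzing $T_1$ applied to each: the local contribution is handled via the weak-type $(1,1)$ bound for $T_1$ combined with an $L\log L$ size estimate for $T_2$, the extra logarithm being absorbed into the $L\log L$ Luxemburg norm on the $f$-side; the nonlocal part is controlled using the H\"older regularity of $K_{T_1}$. Under the hypothesis $T_1(1)=0$, one can upgrade certain $L^{\infty}$-type bounds on $T_1h$ to mean-oscillation bounds, and it is precisely this gain which permits the replacement of $\|g\|_{L\log L,\,Q}$ by $\langle g\rangle_Q$ in the sparse form. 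Keeping exact track of the logarithmic losses at each stage -- in the sparse construction, in the passage to the weighted weak-type inequality, and in the $A_1$-$A_{\infty}$ step -- will be the most delicate part of the argument, since the powers of $\log({\rm e}+[w]_{A_\infty})$ in \eqref{eq1.8} and \eqref{eq1.81} must match exactly.
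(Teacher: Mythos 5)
Your plan follows the same broad outline as the paper (bi-sublinear sparse domination, then a weighted weak-type transference), but there is a genuine gap at the most delicate point: the form of the sparse domination you claim for general $T_1,T_2$. You propose
$|\langle T_1T_2f,g\rangle|\lesssim\sum_{Q\in\mathcal{S}}\|f\|_{L\log L,Q}\|g\|_{L\log L,Q}|Q|$,
i.e.\ an $L\log L$ Luxemburg average on \emph{both} sides. Feeding this into the weak-type transference lemma (Lemma~\ref{lem3.2}, with $\beta_1=\beta_2=1$) yields
$u(\{|T_1T_2f|>\lambda\})\lesssim \epsilon^{-2}\int\frac{|f|}{\lambda}\log({\rm e}+\tfrac{|f|}{\lambda})\,M_{L(\log L)^{1+\epsilon}}u\,dx$,
and after optimizing $\epsilon\approx 1/\log({\rm e}+[w]_{A_\infty})$ this gives $[w]_{A_1}[w]_{A_\infty}\log^2({\rm e}+[w]_{A_\infty})$, one logarithm worse than the claimed estimate~\eqref{eq1.8}. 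The paper avoids this by proving something strictly stronger (Theorem~\ref{thm3.1}(i)): $T_1T_2f$ is \emph{decomposed} as $J_0+J_1$, where $J_0$ satisfies a bound by $\mathcal{A}_{\mathcal{S};L\log L,L^1}(f,g)$ and $J_1$ by $\mathcal{A}_{\mathcal{S};L^1,L\log L}(f,g)$ --- the $L\log L$ average never appears simultaneously on the $f$- and $g$-sides for the same piece. Applying Lemma~\ref{lem3.2} to each piece separately (with $\epsilon=1$ for $J_0$ and small $\epsilon$ for $J_1$) produces the factor $\epsilon^{-1}$ rather than $\epsilon^{-2}$, and this is exactly what delivers the single logarithm in~\eqref{eq1.8}. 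Without that splitting, your $L\log L$--$L\log L$ form cannot recover the stated exponent. You would either need to discover the decomposition yourself or supply a different transference mechanism.

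Two further points. First, the passage from the sparse form to the weighted weak-type estimate is more involved than testing against $g=w\chi_E/w(E)$: a pairing bound alone does not give a weak-type inequality by naive duality (there is no lower bound on $\langle T_1T_2f,g\rangle$ in terms of $\lambda\,w(E)$). The paper's Lemma~\ref{lem3.2} runs a Calder\'on--Zygmund decomposition of $f$ at the level of $M_{\mathscr{D},L(\log L)^{\beta_1}}f$, treats the exceptional set separately, and then uses the two-weight estimate~\eqref{eq3.12new} and the ideas from Hyt\"onen--P\'erez; this machinery, not a direct duality test, is what produces the precise $\epsilon$-dependence. Second, in the $T_1(1)=0$ case your claimed form $\mathcal{A}_{\mathcal{S};L\log L,L^1}$ is essentially correct and does yield~\eqref{eq1.81}, but the mechanism in the paper is different from what you sketch: rather than upgrading $L^\infty$ bounds on $T_1h$ to oscillation bounds in the course of a stopping-time argument, the paper invokes the pointwise sparse domination of \cite[Proposition~9]{bb}, $|T_1T_2f(x)|\lesssim\sum_{Q\in\mathcal{S}}\operatorname{osc}_Q(T_2f)\chi_Q(x)$, and then estimates $\operatorname{osc}_Q(T_2f)\lesssim\|f\|_{L\log L,8nQ}+\sum_k 2^{-k\varepsilon}\langle|f|\rangle_{2^kQ}$, handling the tail via Lerner's dyadic decomposition \cite{ler1}.
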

\begin{theorem}\label{thm1.2}Let $T_1$ and $T_2$  be Calder\'on-Zygmund operators, $b\in {\rm BMO}(\mathbb{R}^n)$.  Then
for $w\in A_{1}(\mathbb{R}^n)$ and $\lambda>0$,
\begin{eqnarray}\label{eq1.9}&&w\big(\{x\in\mathbb{R}^n:\,|T_{1,b} T_2f(x)|>\lambda\}\big)\\
&&\quad\lesssim [w]_{A_1}[w]_{A_{\infty}}^2\log ({\rm e}+[w]_{A_{\infty}}\big)
\int_{\mathbb{R}^n}\frac{|f(x)|}{\lambda}\log^2\Big({\rm e}+\frac{|f(x)|}{\lambda}\Big)w(x)dx.\nonumber
\end{eqnarray}
\end{theorem}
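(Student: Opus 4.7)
The plan is to reduce Theorem~\ref{thm1.2} to a bilinear sparse domination for $T_{1,b}T_2$ with $L(\log L)^2$-type Orlicz averages, and then convert that domination into the weighted weak-type endpoint via a Calderón--Zygmund decomposition against a weight in $A_1$. Because $T_2^*(1)$ need not vanish, $T_1T_2$ is not a Calderón--Zygmund operator in the standard sense, and this already forces one $L(\log L)$ bump on the $f$-side in the sparse domination of $T_1T_2$ used to prove Theorem~\ref{thm1.1}; the commutator with $b\in\mathrm{BMO}$ inserts a second such bump. Concretely, the target is a $\tfrac12$-sparse family $\mathcal S=\mathcal S(f,g)$ such that
\begin{equation*}
\Big|\int_{\mathbb R^n}T_{1,b}T_2f(x)\,g(x)\,dx\Big|\lesssim \|b\|_{\mathrm{BMO}}\sum_{Q\in\mathcal S}|Q|\bigl(\|f\|_{L(\log L)^2,Q}\langle g\rangle_Q+\|f\|_{L(\log L),Q}\|g\|_{L(\log L),Q}\bigr),
\end{equation*}
for compactly supported bounded $f$ and $g$.

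The sparse domination itself will be obtained through a grand maximal truncation operator $\mathcal M_{T_{1,b}T_2}$ adapted to the composition with the commutator (this is the source of the ``grand maximal operator'' keyword). Writing $T_{1,b}T_2f=b\,T_1T_2f-T_1(bT_2f)$ and subtracting on each stopping cube $Q$ the average $b_Q=|Q|^{-1}\int_Qb$, the commutator piece splits into a term controlled by $\|b\|_{\mathrm{BMO}}$ times the grand truncation of $T_1T_2$ on $f$, and a term where $b-b_Q$ is absorbed by a John--Nirenberg estimate in the $L(\log L)$ Orlicz norm, thereby producing the second $L(\log L)$ level. A standard stopping-time selection relative to $M_{L(\log L)^2}f$ and $\mathcal M_{T_{1,b}T_2}f$ then yields the sparse family $\mathcal S$. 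Once the domination is in place, I would reduce \eqref{eq1.9} to estimating the associated bilinear sparse form against $g=hw$ with $0\le h\le \chi_E$, $E=\{|T_{1,b}T_2f|>1\}$ (after normalizing $\lambda=1$), perform a Calderón--Zygmund decomposition of $f$ at level $1$ relative to the weighted Orlicz maximal $M^{L(\log L)^2}_w$, and handle the good and bad parts separately: the good part enters through an $L^2(w)$ bound, contributing two $[w]_{A_\infty}$ factors (one from the composition, one from the commutator) and the $\log(\mathrm{e}+[w]_{A_\infty})$ from the sharp reverse Hölder inequality of Hytönen--Pérez; the bad part is absorbed using the $A_1$ hypothesis to swap $|Q|$ for $w(Q)/[w]_{A_1}$.

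The principal obstacle will be the first step, i.e.\ securing the refined bilinear sparse bound with exactly two $L(\log L)$ levels distributed across the two terms. The challenge is to combine, within a single grand maximal truncation, the BMO cancellation gained by subtracting $b_Q$ with the $L(\log L)$ bump that is intrinsically required to dominate the composition $T_1T_2$ whose right factor lacks $T_2^*(1)=0$. Any mismatch in the Orlicz bookkeeping here would produce either an extra $L(\log L)$ on the right-hand side of \eqref{eq1.9} or an additional power of $[w]_{A_\infty}$, so the key technical work is verifying that the grand maximal operator, together with its weak $(1,1)$ bound against $L(\log L)^2$ bumps, yields exactly the sparse form displayed above. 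Once this is established, the conversion to \eqref{eq1.9} follows the blueprint for commutator endpoints due to Pérez and Hytönen--Pérez, with the bookkeeping adjusted to accommodate the extra $L(\log L)$ coming from the composition.
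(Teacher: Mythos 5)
Your overall strategy---a grand maximal truncation adapted to $T_{1,b}T_2$, a stopping-time bilinear sparse domination, and then a Hyt\"onen--P\'erez style conversion to the $A_1$ endpoint---is indeed the architecture of the paper's proof. But there is a genuine gap in the sparse form you set as your target. You propose
\begin{equation*}
\Big|\int T_{1,b}T_2f\cdot g\Big|\lesssim\sum_{Q\in\mathcal S}|Q|\Big(\|f\|_{L(\log L)^2,Q}\langle |g|\rangle_Q+\|f\|_{L\log L,Q}\|g\|_{L\log L,Q}\Big),
\end{equation*}
whereas the correct bilinear sparse domination (Theorem~\ref{thm3.1}(ii) of the paper) requires a third term $\langle |f|\rangle_Q\|g\|_{L(\log L)^2,Q}$. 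This term is not an artifact of the paper's bookkeeping: it arises inevitably on the ``local'' piece of the stopping-time decomposition, where one must pair $T_{1,b}$ applied to $\chi_{3P_l}T_2(f\chi_{9Q_0\setminus 9P_l})$ against $g\chi_{P_l}$, which by duality is an integral against $\widetilde T_{1,b}(g\chi_{P_l})$. The adjoint commutator $\widetilde T_{1,b}$ is only of weak $L\log L$ type, and feeding that into Lemma~\ref{lem2.1} puts $\|g\|_{L(\log L)^2,Q}$ (not $\|g\|_{L\log L,Q}$) on the $g$-side, with only $\langle |f|\rangle_{9Q_0}$ remaining on the $f$-side. In your own decomposition $T_{1,b}T_2f=(b-b_Q)T_1T_2f-T_1\big((b-b_Q)T_2f\big)$, this is the second summand after passing to $\widetilde T_1$ by duality and then pairing $b-b_Q$ with $\widetilde T_1 g$: the exponential/log Orlicz duality together with the extra $p'$ loss from the $L^p$ bound of $\widetilde T_1$ forces a $L(\log L)^2$ bump on $g$.

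This omission is not cosmetic. In the conversion step (via Lemma~\ref{lem3.2}), the term $\langle |f|\rangle_Q\|g\|_{L(\log L)^2,Q}$ is precisely what produces $M_{L(\log L)^{2+\epsilon}}w$ and hence, via sharp reverse H\"older for $A_\infty$, the factor $[w]_{A_\infty}^2\log(\mathrm{e}+[w]_{A_\infty})$ in~\eqref{eq1.9}. With your two-term form the bottleneck would be $M_{L(\log L)^{1+\epsilon}}w$, which only costs one power of $[w]_{A_\infty}$; you would ostensibly prove a strictly better bound than Theorem~\ref{thm1.2} claims, a sign that the two-term form does not hold. Note also that your subsequent count of ``two $[w]_{A_\infty}$ factors, one from the composition, one from the commutator'' is already inconsistent with your own two-term sparse form, since the composition and commutator losses on the $g$-side are what should force the $L(\log L)^2$ there. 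The fix is to record all three terms $\mathcal{A}_{\mathcal{S};L(\log L)^{2-j},L(\log L)^j}(f,g)$, $j=0,1,2$, as the paper does, and then run the conversion; your sketch of the final weighted CZ decomposition of $f$ is a reasonable substitute for the paper's Lemma~\ref{lem3.2}, provided it carries the $L(\log L)^{2+\epsilon}$ bump on the weight side.
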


Throughout the article,  $C$ always denotes a
positive constant that is independent of the main parameters
involved but whose value may differ from line to line. We use the
symbol $A\lesssim B$ to denote that there exists a universal constant
$C$ such that $A\le CB$.  Specially, we use $A\lesssim_{n,p} B$ to denote that there exists a positive constant
$C$ depending only on $n,p$ such that $A\le CB$. Constant with subscript such as $c_1$,
does not change in different occurrences. For any set $E\subset\mathbb{R}^n$,
$\chi_E$ denotes its characteristic function.  For a cube
$Q\subset\mathbb{R}^n$ and $\lambda\in(0,\,\infty)$, we use
$\lambda Q$ to denote the cube with the same center as $Q$ and whose
side length is $\lambda$ times that of $Q$.
For $\beta\in [0,\,\infty)$,   cube $Q\subset \mathbb{R}^n$ and a suitable function $g$, $\|g\|_{L(\log L)^{\beta},\,Q}$ is   defined by
$$\|g\|_{L(\log L)^{\beta},\,Q}=\inf\Big\{\lambda>0:\,\frac{1}{|Q|}\int_{Q}\frac{|g(y)|}{\lambda}\log^{\beta}\Big({\rm e}+\frac{|g(y)|}{\lambda}\Big)dy\leq 1\Big\}.$$
We denote $\|g\|_{L(\log L)^{0},\,Q}$ by $\langle |g|\rangle_{Q}$. For $r\in (0,\,\infty)$, we set $\langle |g|\rangle_{r,Q}=(\langle|g|^r\rangle_{Q})^{1/r}$. For $\beta\in [0,\,\infty)$, the maximal operator $M_{L(\log L)^{\beta}}$ is defined by
$$M_{L(\log L)^{\beta}}f(x)=\sup_{Q\ni x}\|f\|_{L(\log L)^{\beta},\,Q}.$$
It is well known that
\begin{eqnarray}\label{eq1.10}
\big|\{x\in\mathbb{R}^n:\, M_{L(\log L)^{\beta}}f(x)>\lambda\}\big|\lesssim \int_{\mathbb{R}^n}\frac{|f(x)|}{\lambda}
\log ^{\beta}\Big({\rm e}+\frac{|f(x)|}{\lambda}\Big)dx.
\end{eqnarray}
\section{Two grand maximal operators}
For a linear operator $T$, we define the corresponding  grand maximal  operator $\mathcal{M}_T$  by
$$\mathcal{M}_{T}f(x)=\sup_{Q\ni x}{\rm ess}\sup_{\xi\in Q}|T(f\chi_{\mathbb{R}^n\backslash 3Q})(\xi)|,$$
where the supremum is taken over all cubes $Q\subset \mathbb{R}^n$ containing $x$.
This operator was introduced by Lerner \cite{ler2}. Moreover, Lerner \cite{ler2} proved that a Calder\'on-Zygmund operator $T$ with kernel $K$ in the sense of (\ref{equ:1.3}) satisfies that
\begin{eqnarray}\label{eq2.1}\mathcal{M}_Tf(x)\lesssim T^*f(x)+Mf(x).\end{eqnarray}
where $T^*$ denotes the maximal operator associated with $T$, defined by
$$T^*f(x)=\sup_{\epsilon>0}\Big|\int_{|x-y|>\epsilon}K(x,\,y)f(y)dy\Big|.$$Let $T_1$, $T_2$ be Calder\'on-Zygmund operators and $b\in {\rm BMO}(\mathbb{R}^n)$. We define the grand maximal operators $\mathcal{M}_{T_1T_2}^{*}$ and $\mathcal{M}_{T_1T_{2,b}}^{*}$ by
$$\mathcal{M}_{T_1T_2}^{*}f(x)=\sup_{Q\ni x}{\rm ess}\sup_{\xi\in Q}|T_1\big(\chi_{\mathbb{R}^n\backslash 3Q}
T_2(f\chi_{\mathbb{R}^n\backslash 9Q})\big)(\xi)|$$
and
$$\mathcal{M}_{T_1T_{2, b}}^{*}f(x)=\sup_{Q\ni x}{\rm ess}\sup_{\xi\in Q}|T_1\big(\chi_{\mathbb{R}^n\backslash 3Q}
T_{2,b}(f\chi_{\mathbb{R}^n\backslash 9Q})\big)(\xi)|$$
respectively. As we will see in Section 3, these two operators play important roles in the proof of Theorem \ref{thm1.1}.
This section is devoted to the endpoint estimates for the operators $\mathcal{M}_{T_1T_2}^{*}$ and $\mathcal{M}_{T_1T_{2, b}}^{*}$. We begin with some preliminary lemmas.
\begin{lemma}\label{lem2.1}
Let $p_0\in (1,\,\infty)$, $\varrho\in [0,\,\infty)$ and $S$ be a sublinear operator. Suppose that
$$\|Sf\|_{L^{p_0}(\mathbb{R}^n)}\le A_1 \|f\|_{L^{p_0}(\mathbb{R}^n)},$$
and for all $\lambda>0$,
$$\big|\{x\in\mathbb{R}^n:\,|Sf(x)|>\lambda\}\big|\le A_2 \int_{\mathbb{R}^n}\frac{|f(x)|}{\lambda}
\log ^{\varrho}\Big({\rm e}+\frac{|f(x)|}{\lambda}\Big)dx.$$
Then for $\beta\in [0,\,\infty)$ and  two  cubes $Q_2,\,Q_1\subset \mathbb{R}^n$,
\begin{eqnarray}\label{equ2.2}&&\int_{Q_1}|S(f\chi_{Q_2})(x)|\log^{\beta}\big({\rm e}+|S(f\chi_{Q_2})(x)|\big)dx\\
&&\quad\lesssim |Q_1|+(A_1^{p_0}+A_2)\int_{Q_2}|f(x)|\log^{\beta+\varrho+1} ({\rm e}+|f(x)|) dx.\nonumber\end{eqnarray}
Moreover, if $\{Q_j\}\subset \mathcal{D}(Q_0)$ with pairwise disjoint interiors, then
\begin{eqnarray}\label{equ2.3}
\sum_{j}\int_{5Q_j}|S(f\chi_{Q_j})(x)|dx\lesssim (1+A_1^{p_0}+A_2)|Q_0|\|f\|_{L(\log L)^{\varrho+1},\,Q_0}.
\end{eqnarray}
\end{lemma}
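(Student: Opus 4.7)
The plan is to establish (2.2) by a layer-cake formula combined with a level-dependent Calder\'on--Zygmund type decomposition of $f$, applying the strong $L^{p_0}$ bound to the bounded part and the weak endpoint bound to the unbounded part. The bound (2.3) will then follow from (2.2) with $\beta=0$ by a standard renormalization against the Luxemburg norm.

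Writing $\Phi(t)=t\log^{\beta}(e+t)$ and $E_{\lambda}=\{x\in Q_1:|S(f\chi_{Q_2})(x)|>\lambda\}$, I would start from
$$\int_{Q_1}\Phi\bigl(|S(f\chi_{Q_2})|\bigr)\,dx=\int_0^{\infty}\Phi'(\lambda)|E_{\lambda}|\,d\lambda,$$
and split the outer integral at $\lambda=1$. For $0<\lambda\le 1$ the trivial bound $|E_{\lambda}|\le|Q_1|$ contributes at most $\Phi(1)|Q_1|\lesssim|Q_1|$, which gives the first term on the right of (2.2). For $\lambda>1$ I would decompose $f\chi_{Q_2}=g^{\lambda}+g_{\lambda}$ with $g^{\lambda}=f\chi_{Q_2\cap\{|f|\le\lambda\}}$ and $g_{\lambda}=f\chi_{Q_2\cap\{|f|>\lambda\}}$, and use sublinearity to write $|E_{\lambda}|\le|\{|Sg^{\lambda}|>\lambda/2\}|+|\{|Sg_{\lambda}|>\lambda/2\}|$. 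Chebyshev together with the $L^{p_0}$ hypothesis gives
$$|\{|Sg^{\lambda}|>\lambda/2\}|\lesssim A_1^{p_0}\lambda^{-p_0}\int_{Q_2\cap\{|f|\le\lambda\}}|f|^{p_0}\,dx,$$
while the weak endpoint yields
$$|\{|Sg_{\lambda}|>\lambda/2\}|\lesssim A_2\lambda^{-1}\int_{Q_2\cap\{|f|>\lambda\}}|f|\log^{\varrho}(e+|f|/\lambda)\,dx.$$
After Fubini, with $\Phi'(\lambda)\lesssim\log^{\beta}(e+\lambda)$, the $L^{p_0}$ piece becomes $A_1^{p_0}\int_{Q_2}|f|^{p_0}\int_{\max(1,|f|)}^{\infty}\lambda^{-p_0}\log^{\beta}(e+\lambda)\,d\lambda\,dx$, and the elementary estimate $\int_u^{\infty}\lambda^{-p_0}\log^{\beta}(e+\lambda)\,d\lambda\lesssim u^{1-p_0}\log^{\beta}(e+u)$ for $u\ge1$ (together with the trivial case $|f|\le1$, handled by $|f|^{p_0}\le|f|$) bounds it by $A_1^{p_0}\int_{Q_2}|f|\log^{\beta}(e+|f|)\,dx$. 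The endpoint piece becomes $A_2\int_{Q_2\cap\{|f|>1\}}|f|\int_1^{|f|}\lambda^{-1}\log^{\beta}(e+\lambda)\log^{\varrho}(e+|f|/\lambda)\,d\lambda\,dx$, and on $1\le\lambda\le|f|$ one has $\log(e+|f|/\lambda)\lesssim\log(e+|f|)$, so the inner integral is $\lesssim\log^{\beta+\varrho+1}(e+|f|)$. Summing these gives (2.2).

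For (2.3), I would apply (2.2) with $\beta=0$, $Q_1=5Q_j$, $Q_2=Q_j$, to the scaled function $f/\mu$ where $\mu=\|f\|_{L(\log L)^{\varrho+1},Q_0}$. By definition of the Luxemburg norm, $\int_{Q_j}(|f|/\mu)\log^{\varrho+1}(e+|f|/\mu)\,dx$ summed over the disjoint $\{Q_j\}\subset Q_0$ is at most $|Q_0|$, and $\sum_j|5Q_j|\le 5^n|Q_0|$; positive homogeneity of the sublinear $S$ then multiplies through by $\mu$ to yield (2.3). The main delicate point is the coupling of the two hypotheses in the right ranges of $\lambda$: naively using only the endpoint bound produces a divergent $\lambda$-integral near infinity, while using only the $L^{p_0}$ bound cannot recover the factor $\log^{\varrho}(e+|f|/\lambda)$ needed to reach the power $\beta+\varrho+1$ on the right of (2.2). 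The level-dependent splitting of $f$ at the \emph{same} level $\lambda$ used in the layer cake, rather than at a fixed threshold, is what makes the two estimates fit together with exactly the correct logarithmic exponent.
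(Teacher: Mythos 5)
Your proof is correct and follows essentially the same strategy the paper relies on: the paper only cites \cite{huyang} for the case $\beta=0$ and says the general case is similar and that (\ref{equ2.3}) follows by homogeneity, and the argument in that reference is precisely a layer-cake formula combined with a level-dependent splitting of $f$ at the layer height $\lambda$, using the $L^{p_0}$ bound on $\{|f|\le\lambda\}$ and the weak $L\log^{\varrho}L$ endpoint on $\{|f|>\lambda\}$, exactly as you do. Your Fubini computations and the elementary integral estimates (including $\int_u^{\infty}\lambda^{-p_0}\log^{\beta}(e+\lambda)\,d\lambda\lesssim u^{1-p_0}\log^{\beta}(e+u)$, $\Phi'(\lambda)\lesssim\log^{\beta}(e+\lambda)$, and the bound $\log(e+|f|/\lambda)\le\log(e+|f|)$ for $\lambda\ge1$) are all sound, and the passage to (\ref{equ2.3}) via renormalization by the Luxemburg norm, disjointness of the $Q_j$, and $\sum_j|5Q_j|\le5^n|Q_0|$ is the intended ``homogeneity'' argument.
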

For the case of $\beta=0$, (\ref{equ2.2}) was proved in \cite{huyang}. For the case of $\beta\in (0,\,\infty)$, the proof is similar.
And (\ref{equ2.3}) follows from (\ref{equ2.2}) by homogeneity.

\begin{lemma}\label{lem2.2}
Let $s\in [0,\,\infty)$, $S$ be a sublinear operator which satisfies that for any $\lambda>0$,
$$\big|\{x\in \mathbb{R}^n:\, |Sf(x)|>\lambda\}\big|\lesssim \int_{\mathbb{R}^n}\frac{|f(x)|}{\lambda}\log^s\Big({\rm e}+\frac{|f(x)|}{\lambda}\Big)dx.
$$
Then for any $\varrho\in (0,\,1)$ and cube $Q\subset \mathbb{R}^n$,$$
\Big(\frac{1}{|Q|}\int_{Q}|S(f\chi_{Q})(x)|^{\varrho}dx\Big)^{\frac{1}{\varrho}}\lesssim \|f\|_{L(\log L)^{s},\,Q}.$$
\end{lemma}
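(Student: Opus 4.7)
\medskip

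\noindent\textbf{Proof proposal for Lemma \ref{lem2.2}.} The plan is to apply Kolmogorov-type truncation of the distribution function of $S(f\chi_Q)$ at the Luxemburg level $\lambda_0:=\|f\|_{L(\log L)^s,\,Q}$, and then to use the hypothesized weak-type bound of $S$ above that level. The monotonicity of $t\mapsto\log^s({\rm e}+t)$ is what allows the ``logarithmic factor'' to be frozen at the threshold in the tail integral.

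First, by the very definition of the Luxemburg norm,
\begin{equation*}
\int_{Q}|f(y)|\log^{s}\!\Big({\rm e}+\frac{|f(y)|}{\lambda_0}\Big)\,dy\;\le\;\lambda_0|Q|.
\end{equation*}
Next, fix $\varrho\in(0,1)$ and write
\begin{equation*}
\int_{Q}|S(f\chi_{Q})(x)|^{\varrho}\,dx
=\varrho\int_{0}^{\infty}\lambda^{\varrho-1}\big|\{x\in Q:\,|S(f\chi_{Q})(x)|>\lambda\}\big|\,d\lambda.
\end{equation*}
I would split this at $\lambda=\lambda_0$. For $\lambda\le\lambda_0$ use the trivial bound $|Q|$ for the level set, giving a contribution of $|Q|\lambda_0^{\varrho}$. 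For $\lambda>\lambda_0$ insert the hypothesized weak-type estimate of $S$ applied to $f\chi_Q$, producing
\begin{equation*}
C\varrho\int_{Q}|f(y)|\int_{\lambda_0}^{\infty}\lambda^{\varrho-2}\log^{s}\!\Big({\rm e}+\frac{|f(y)|}{\lambda}\Big)\,d\lambda\,dy.
\end{equation*}

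The key step is to control the inner $\lambda$-integral. Since $\lambda\ge\lambda_0$, one has $|f(y)|/\lambda\le|f(y)|/\lambda_0$, hence $\log^s({\rm e}+|f(y)|/\lambda)\le\log^{s}({\rm e}+|f(y)|/\lambda_0)$, which can be pulled out of the $\lambda$-integral. What remains is $\varrho\int_{\lambda_0}^{\infty}\lambda^{\varrho-2}\,d\lambda=\frac{\varrho}{1-\varrho}\lambda_0^{\varrho-1}$, the convergence of which is exactly where $\varrho<1$ is used. Combining with the first integral one arrives at
\begin{equation*}
\int_{Q}|S(f\chi_{Q})(x)|^{\varrho}\,dx
\;\le\;|Q|\lambda_0^{\varrho}+\frac{C\varrho}{1-\varrho}\lambda_0^{\varrho-1}\!\int_{Q}|f(y)|\log^{s}\!\Big({\rm e}+\frac{|f(y)|}{\lambda_0}\Big)dy
\;\lesssim\;|Q|\lambda_0^{\varrho},
\end{equation*}
where the last step invokes the Luxemburg-norm inequality recalled at the outset. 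Taking $\varrho$-th roots and dividing by $|Q|^{1/\varrho}$ yields the claim.

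I do not foresee a genuine obstacle here; the only point that needs care is the monotonicity argument for the $\lambda$-integral, which must stop at $\lambda_0$ (not at $1$) precisely so that the resulting ``$\log^s({\rm e}+|f|/\lambda_0)$'' fits into the definition of $\|f\|_{L(\log L)^s,\,Q}$. The constant depends on $\varrho$ through $\varrho/(1-\varrho)$, which is consistent with the implicit constant in the conclusion.
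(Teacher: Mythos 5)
Your proof is correct and is the standard Kolmogorov-type argument: split the layer-cake integral for $\|S(f\chi_Q)\|_{L^\varrho(Q)}^\varrho$ at the Luxemburg threshold $\lambda_0=\|f\|_{L(\log L)^s,Q}$, use the trivial bound $|Q|$ below $\lambda_0$ and the weak-type hypothesis together with the monotonicity of $t\mapsto\log^s(\mathrm{e}+t)$ above $\lambda_0$, and finally invoke $\int_Q|f|\log^s(\mathrm{e}+|f|/\lambda_0)\,dy\le\lambda_0|Q|$. The paper itself does not prove this lemma but cites \cite[p.~643]{huli}; the argument there is the same Kolmogorov truncation, so your proposal is essentially the paper's route.
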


For the proof of Lemma \ref{lem2.2}, see \cite[p. 643]{huli}.

\begin{lemma}\label{lem2.3}
Let $R>1$, $\Omega\subset \mathbb{R}^n$ be a open set. Then  $\Omega$ can be decomposed as
$\Omega=\cup_{j}Q_j$, where $\{Q_j\}$ is a sequence of cubes with disjoint interiors, and
\begin{itemize}
\item[\rm (i)]
$$5R\le \frac{{\rm dist}(Q_j,\,\mathbb{R}^n\backslash \Omega)}{{\rm diam} Q_j}\le 15R,$$
\item[\rm (ii)] $\sum_{j}\chi_{RQ_j}(x)\lesssim_{n,R} \chi_{\Omega}(x).$
\end{itemize}
\end{lemma}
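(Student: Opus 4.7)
The plan is to run a parameterized Whitney decomposition tied to the scale $R$. Fix the standard dyadic lattice $\mathcal{D}$ on $\mathbb{R}^n$ and declare a cube $Q\in\mathcal{D}$ to be \emph{good} if $Q\subset\Omega$ and ${\rm dist}(Q,\mathbb{R}^n\setminus\Omega)\ge 5R\,{\rm diam}\,Q$. Let $\{Q_j\}$ be the family of cubes that are maximal (with respect to inclusion) among good cubes. Since any two standard dyadic cubes are either nested or have disjoint interiors, maximality forces the $Q_j$ to have pairwise disjoint interiors. To see that $\Omega=\bigcup_j Q_j$, assume $\Omega\ne\mathbb{R}^n$ (otherwise any single dyadic generation tiles $\Omega$) and note that each $x\in\Omega$ has $d(x):={\rm dist}(x,\mathbb{R}^n\setminus\Omega)>0$, so every sufficiently small dyadic cube containing $x$ is good and the maximal such cube is some $Q_j$.

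For condition (i), the lower bound is immediate from the definition of goodness. For the upper bound, let $\widehat{Q}_j$ denote the dyadic parent of $Q_j$. By maximality either $\widehat{Q}_j\not\subset\Omega$ or $\widehat{Q}_j$ fails the distance condition; in either case ${\rm dist}(\widehat{Q}_j,\Omega^c)<5R\,{\rm diam}\,\widehat{Q}_j=10R\,{\rm diam}\,Q_j$. Combined with the trivial inequality ${\rm dist}(Q_j,\Omega^c)\le{\rm dist}(\widehat{Q}_j,\Omega^c)+{\rm diam}\,\widehat{Q}_j$, this yields ${\rm dist}(Q_j,\Omega^c)<(10R+2)\,{\rm diam}\,Q_j\le 15R\,{\rm diam}\,Q_j$, the last step using $R>1$.

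For condition (ii), suppose $x\in RQ_j$. Since $Q_j\subset RQ_j$, every point of $Q_j$ lies within ${\rm diam}(RQ_j)=R\,{\rm diam}\,Q_j$ of $x$. Applying the triangle inequality to $y\in\Omega^c$ and a point of $Q_j$ nearest to $y$, together with the two bounds from (i), produces the two-sided estimate
\[
\frac{d(x)}{16R}\le{\rm diam}\,Q_j\le\frac{d(x)}{4R}.
\]
Consequently ${\rm diam}\,Q_j$ can take only a bounded number of dyadic values (depending on $R$), and on each such scale the pairwise disjointness of the $Q_j$ limits the number of $j$ with $x\in RQ_j$ to at most $O_n(R^n)$; summing over the scales yields the finite overlap of (ii) with a constant $C_{n,R}$.

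I do not foresee any substantive obstacle: this is the classical Whitney construction with the parameter $R$ inserted into the selection criterion, and the only item requiring care is the arithmetic $10R+2\le 15R$, which is built into the hypothesis $R>1$.
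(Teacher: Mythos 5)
The paper does not give a proof of this lemma at all: it simply cites Sawyer, \cite[p.~256]{saw}. Your argument is a correct, self-contained proof by the standard Whitney construction with the selection threshold scaled by $5R$, which is exactly what that reference would do, so the two routes are essentially the same; the value of your write-up is that it makes the argument explicit. All the arithmetic checks out: maximality gives the lower bound in (i) by definition; the parent fails the test (whether because it leaves $\Omega$ or because it violates the distance criterion, both give ${\rm dist}(\widehat{Q}_j,\Omega^c)<5R\,{\rm diam}\,\widehat{Q}_j$), and the triangle inequality plus $R>1$ gives $10R+2\le 15R$; for (ii), the two-sided control $4R\,{\rm diam}\,Q_j\le d(x)\le 16R\,{\rm diam}\,Q_j$ pins $\ell(Q_j)$ to a window of ratio $4$, hence to an \emph{absolute} number of dyadic generations (at most three, not a number depending on $R$ as you say in passing -- a harmless imprecision), and at each scale disjointness gives at most $O_n(R^n)$ cubes $Q_j$ with $x\in RQ_j$. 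One small point worth making explicit: the two-sided bound in (ii) also shows $d(x)>0$ whenever $x\in RQ_j$, so the overlap is supported in $\Omega$, which is needed for the conclusion $\sum_j\chi_{RQ_j}\lesssim_{n,R}\chi_\Omega$ rather than just $\lesssim_{n,R}1$.
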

For the proof of Lemma \ref{lem2.3}, see \cite[p.\,256]{saw}.

\begin{lemma}\label{lem2.4} Let $\beta\in [0,\,\infty)$,  $U$ be a sublinear operator which is bounded on $L^2(\mathbb{R}^n)$, and satisfies that for any $t>0$,
$$\big|\{x\in\mathbb{R}^n:\, |Uf(x)>t\}\big|\lesssim \int_{\mathbb{R}^n}\frac{|f(x)|}{t}\log ^{\beta}\Big({\rm e}+\frac{|f(x)|}{t}\Big)dx.$$
Let $T$ be a Calder\'on-Zygmund operator and $b\in {\rm BMO}$ with $\|b\|_{{\rm BMO}(\mathbb{R}^n)}=1$. Then for any $\lambda>0$,
\begin{eqnarray}\label{eq2.4}\big|\{x\in\mathbb{R}^n:\, |UTf(x)|>\lambda\}\big|\lesssim
\int_{\mathbb{R}^n}\frac{|f(x)|}{\lambda}\log^{\beta+1} \Big ({\rm e}+\frac{|f(x)|}{\lambda}\Big)dx;\end{eqnarray}
and
\begin{eqnarray}\label{eq2.5}\big|\{x\in\mathbb{R}^n:\, |UT_bf(x)|>\lambda\}\big|\lesssim
\int_{\mathbb{R}^n}\frac{|f(x)|}{\lambda}\log^{\beta+2} \Big ({\rm e}+\frac{|f(x)|}{\lambda}\Big)dx.\end{eqnarray}
\end{lemma}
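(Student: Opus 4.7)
The plan is to prove \eqref{eq2.4}; estimate \eqref{eq2.5} follows the same structure with one additional logarithmic factor arising from $b\in\mathrm{BMO}$. By homogeneity, normalize $\lambda=1$. Apply the Calder\'on-Zygmund decomposition to $|f|$ at level $1$: write $f=g+\sum_j b_j$ with pairwise disjoint cubes $Q_j$, $\mathrm{supp}\,b_j\subset Q_j$, $\int b_j=0$, $\|b_j\|_1\lesssim|Q_j|$, $\|g\|_\infty\lesssim1$, and $\sum_j|Q_j|\lesssim\|f\|_1$. For the good part, the $L^2$-boundedness of $U$ (from the hypothesis) and of $T$ yields $|\{|UTg|>1/2\}|\lesssim\|g\|_2^2\lesssim\|g\|_\infty\|g\|_1\lesssim\|f\|_1$, which is dominated by the desired right-hand side of \eqref{eq2.4}. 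Set $\Omega=\bigcup_j 3Q_j$, so $|\Omega|\lesssim\|f\|_1$, and split $T\sum_j b_j=A+B$ with $A=\chi_\Omega T\sum_j b_j$, $B=\chi_{\Omega^c}T\sum_j b_j$; by sublinearity $|UT\sum_j b_j|\le|UA|+|UB|$.

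For $|UA|$, the weak-type hypothesis on $U$ reduces matters to bounding $\int|A|\log^\beta(\mathrm{e}+|A|)\,dx$. The bounded overlap of $\{3Q_j\}$ together with the standard kernel estimates to control cross-terms dominates this by $\sum_j\int_{3Q_j}|Tb_j|\log^\beta(\mathrm{e}+|Tb_j|)\,dx$. Lemma \ref{lem2.1} applied with $S=T$, $p_0=2$, and $\varrho=0$ (the Calder\'on-Zygmund weak $(1,1)$ endpoint) gives, for each $j$,
\[
\int_{3Q_j}|Tb_j|\log^\beta(\mathrm{e}+|Tb_j|)\,dx\lesssim|Q_j|+\int_{Q_j}|b_j|\log^{\beta+1}(\mathrm{e}+|b_j|)\,dx;
\]
summing over $j$ and using $|b_j|\lesssim|f|+1$ produces the bound $\int|f|\log^{\beta+1}(\mathrm{e}+|f|)\,dx$.

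For $|UB|$, the cancellation $\int b_j=0$ and the H\"older smoothness of $K$ give for $x\in\Omega^c$ the pointwise estimate $|B(x)|\lesssim\sum_j(\ell(Q_j)/|x-c_j|)^{n+\varepsilon}\chi_{(3Q_j)^c}(x)$; a pairwise analysis exploiting the disjointness of the $Q_j$ and the exponent $\varepsilon>0$ yields $\|B\|_2^2\lesssim\|f\|_1$, hence $|\{|B|>s\}|\lesssim\|f\|_1/s^2$. Truncate $B=B_{\le 1}+B_{>1}$ at height $1$: bound $|\{|UB_{\le 1}|>1/8\}|$ via the $L^2$-boundedness of $U$ (using $\|B_{\le 1}\|_2^2\le\|B\|_1\lesssim\|f\|_1$), and bound $|\{|UB_{>1}|>1/8\}|$ via the weak-type hypothesis of $U$, with the tail yielding $\int_{|B|>1}|B|\log^\beta(\mathrm{e}+|B|)\,dx\lesssim\|f\|_1$. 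Estimate \eqref{eq2.5} proceeds in identical fashion with $T$ replaced by $T_b$: since $T_b=[b,T]$ is of weak type $L(\log L)\to L^{1,\infty}$ (the P\'erez endpoint, using John-Nirenberg for $b\in\mathrm{BMO}$), Lemma \ref{lem2.1} is applied with $\varrho=1$, which accounts for the extra logarithmic factor in \eqref{eq2.5}. The main technical step is the pairwise $L^2$-analysis of $B$, which crucially uses the H\"older exponent $\varepsilon>0$ in the kernel and the disjointness of the Calder\'on-Zygmund cubes.
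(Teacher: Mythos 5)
Your argument for (2.4) is in the right spirit and close to the paper's: both decompose $f$ into a bounded part and a bad part carried on cubes, treat the bounded part by $L^2$, treat the near bad part by the weak-type hypothesis on $U$ together with a local $L\log L$ bound for $T$ (Lemma~\ref{lem2.1}), and treat the far bad part by the $L^2$ boundedness of $U$ and the kernel decay coming from cancellation. Two details should be repaired, however: the bounded overlap of $\{3Q_j\}$ is a feature of the Whitney decomposition of $\{Mf>1\}$ (the paper's Lemma~\ref{lem2.3}), not of the raw dyadic Calder\'on--Zygmund cubes; and $A=\chi_\Omega T\sum_j b_j$ is not $\sum_j\chi_{3Q_j}Tb_j$, so the cross contributions $\chi_\Omega\sum_j\chi_{(3Q_j)^c}Tb_j$ must be handled explicitly (it is cleaner to split the bad part from the outset as $\sum_j\chi_{3Q_j}Tb_j+\sum_j\chi_{(3Q_j)^c}Tb_j$, which is what the paper effectively does).

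The genuine gap is the claim that (2.5) follows ``in identical fashion with $T$ replaced by $T_b$.'' Your $L^2$ estimate of the far part $B$ rests on the pointwise decay $|Tb_j(x)|\lesssim \ell(Q_j)^\varepsilon|x-c_j|^{-n-\varepsilon}\|b_j\|_{L^1}$, which uses $\int b_j=0$ and the H\"older regularity of $K$. The commutator $T_b$ does not inherit this: from
\[
T_b b_j(x)=\bigl(b(x)-\langle b\rangle_{Q_j}\bigr)Tb_j(x)-T\bigl((b-\langle b\rangle_{Q_j})b_j\bigr)(x),
\]
the second term has no cancellation (in general $\int(b-\langle b\rangle_{Q_j})b_j\neq0$), so off $3Q_j$ it decays only like $|x-c_j|^{-n}$ and is not square-integrable there; the bound $\|\chi_{\Omega^c}T_b\sum_j b_j\|_{L^2}^2\lesssim\|f\|_{L^1}$ fails and your $B$-step breaks. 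The paper avoids applying $T_b$ directly to the bad part: writing $T_bh_l=T\bigl((b-\langle b\rangle_{Q_l})h_l\bigr)-(b-\langle b\rangle_{Q_l})Th_l$, it treats $UT\bigl(\sum_l(b-\langle b\rangle_{Q_l})h_l\bigr)$ by invoking the already-proved (2.4) combined with the $\exp L$--$L\log L$ Young duality (this is where the extra logarithm enters), and splits $\sum_l(b-\langle b\rangle_{Q_l})Th_l$ into a near part on $5Q_l$ (handled by Lemma~\ref{lem2.1} and the weak-type hypothesis on $U$) and a far part off $5Q_l$ whose $L^2$ norm is controlled by duality, the decay of $Th_l$ (not of $T_bh_l$), and the $M_{L\log L}$ maximal function via John--Nirenberg. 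Without this restructuring of the commutator, the proposal does not yield (2.5).
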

\begin{proof}We only prove   (\ref{eq2.5}). The proof of   (\ref{eq2.4}) is similar, simpler, and will be omitted.
By homogeneity, it suffices to prove  the inequality (\ref{eq2.5}) for the case of $\lambda=1$.
Applying Lemma \ref{lem2.3} to the set $\{x\in\mathbb{R}^n:\,Mf(x)>1\}$,   we  obtain a sequence of cubes $\{Q_l\}$ with disjoint interiors, such that $\{x\in\mathbb{R}^n:\,Mf(x)>1\}=\cup_{l}Q_l$, and
$$\sum_{l}\chi_{5Q_l}(x)\lesssim 1,\,\,\, \int_{Q_l}|f(y)|dy\lesssim |Q_l|.$$
Let $$g(x)=f(x)\chi_{\mathbb{R}^n\backslash \cup_lQ_l}(x)+\sum_{l}\langle f\rangle_{Q_l}\chi_{Q_l}(x),$$
and
$$h(x)=\sum_{l}(f(x)-\langle f\rangle_{Q_l})\chi_{Q_l}(x):=\sum_{l}h_l(x).$$
Recall that $UT_b$ is bounded on $L^2(\mathbb{R}^n)$. Thus by the fact that $\|g\|_{L^{\infty}(\mathbb{R}^n)}\lesssim 1$, we get  that
\begin{eqnarray*}\big|\{x\in\mathbb{R}^n:\,|UT_bg(x)|>1/2\big\}\big|\lesssim\int_{\mathbb{R}^n}|f(x)|^{2}dx\lesssim \int_{\mathbb{R}^n}|f(x)|dx.\end{eqnarray*}

To estimate $UT_bh$,  write
\begin{eqnarray*}|UT_bh(x)|&\le & \Big|UT\Big(\sum_{l}\big(b-\langle b\rangle_{Q_l}\big)h_l\Big)(x)\Big|+\Big|U\Big(\sum_{l}\big(b-b\rangle_{Q_l}\big)\chi_{5Q_l}Th_l\Big)(x)\Big|\\
&&+\Big|U\Big(\sum_{l}\chi_{\mathbb{R}^n\backslash 5Q_l}\big(b-\langle b\rangle_{Q_l}\big)Th_l\Big)(x)\Big|\\
&=&{\rm V}_1(x)+{\rm V}_2(x)+{\rm V}_3(x).\nonumber
\end{eqnarray*}
We first consider the term ${\rm V}_1$.  Employing Jensen's inequality, we have that for $\gamma>0$,
$$\langle|f|\rangle_Q\log^{\gamma}\big({\rm e}+\langle|f|\rangle_Q)\le \frac{1}{|Q|}\int_{Q}|f(y)|\log^{\gamma} ({\rm e}+|f(y)|)dy.
$$
Observe that for $t_1,\,t_2\in (0,\,\infty)$,
$$(t_1+t_2)\log^{\gamma} ({\rm e}+t_1+t_2)\lesssim_{\gamma}\big[t_1\log^{\gamma} ({\rm e}+t_1)+t_2\log ({\rm e}+t_2)\big].$$
It then follows that
\begin{eqnarray}\label{eq2.6}
\int_{Q_l}|h_l(y)|\log^{\gamma} ({\rm e}+|h_l(y)|)dy\lesssim \int_{Q_l}|f(y)|\log^{\gamma} ({\rm e}+|f(y)|)dy.
\end{eqnarray}
On the other hand, the generalization of H\"older inequality (see \cite{rr}) tells  us that
\begin{eqnarray}\label{eq2.7}t_1t_2\log^{\beta} ({\rm e}+t_1t_2)\lesssim {\rm exp}t_1+\log^{\beta+1}({\rm e}+t_2).\end{eqnarray}
We deduce from inequalities (\ref{eq2.4}), (\ref{eq2.6}) and (\ref{eq2.7}) that
\begin{eqnarray}\label{eq2.8}
&&\big|\{x\in\mathbb{R}^n: |{\rm V}_1(x)|>1/6\}\big|\\
&&\quad\lesssim\sum_{l}\int_{Q_l}|b(x)-\langle b\rangle_{Q_l}||h_l(x)|
\log^{\beta+1} ({\rm e}+|b(x)-\langle b\rangle_{Q_l}||h_l(x)|)dx\nonumber\\
&&\quad\lesssim\sum_{l}\int_{Q_l}{\rm exp}\Big(\frac{|b(x)-\langle b\rangle_{Q_l}|}{C\|b\|_{{\rm BMO}(\mathbb{R}^n)}}\Big)dx+\sum_{l}\int_{Q_l}|h_l(x)|\log^{\beta+2}({\rm e}+|h_l(x)|)dx\nonumber\\
&&\quad\lesssim \int_{\mathbb{R}^n}|f(x)|\log^{\beta+2} ({\rm e}+|f(x)|)dx.\nonumber\end{eqnarray}
Recall that $\chi_{\cup_{l}5Q_l}\lesssim1$. It follows from  Lemma \ref{lem2.1}, inequalities (\ref{eq2.6}) and (\ref{eq2.7}), that
\begin{eqnarray}\label{eq2.9}&&\big|\{x\in\mathbb{R}^n: |{\rm V}_2(x)|>1/6\}\big|\\
&&\quad\lesssim\sum_l\int_{5Q_l}|b(x)-\langle b\rangle_{Q_l}||Th_l(x)|\log^{\beta} ({\rm e}+|b(x)-\langle b\rangle_{Q_l}||Th_l(x)| )dx\nonumber\\
&&\quad\lesssim\sum_{l}\Big(|Q_l|+\int_{Q_l}|Th_l(y)|\log^{\beta+1} ({\rm e}+|Th_l(y)|)dy\Big)\nonumber\\
&&\quad\lesssim\int_{\mathbb{R}^n}|f(y)|\log^{\beta+2} ({\rm e}+|f(y)|)dy.\nonumber\end{eqnarray}

To estimate term ${\rm V}_3$, we first observe that for each $l$ and  $y\in\mathbb{R}^n\backslash 5Q_l$,
\begin{eqnarray}\label{eq2.oper}|Th_l(y)|\lesssim \frac{\{\ell(Q_l)\}^{\varepsilon}}{|y-z_l|^{n+\varepsilon}}\|h_l\|_{L^1(\mathbb{R}^n)},\end{eqnarray}
here $z_l$ is the center of $Q_l$, $\varepsilon\in (0,\,1]$ is a constant.
Thus, for each
$v\in L^{2}(\mathbb{R}^n)$ with  $\|v\|_{L^2(\mathbb{R}^n)}=1$, we have by the John-Nirenberg inequality that
\begin{eqnarray*}
&&\sum_{l}\Big|\int_{\mathbb{R}^n\backslash 5Q_l}\big(b(y)-\langle b\rangle_{Q_l}\big)Th_l(y)v(y)dy\Big|\\
&&\quad\lesssim\sum_{l}\int_{\mathbb{R}^n}|h_l(z)|\int_{\mathbb{R}^n\backslash 5Q_l}\frac{|b(y)-\langle b\rangle_{Q_l}||Q_l|^{\varepsilon/n}}{|y-z_l|^{n+\varepsilon}}|v(y)|dydz\\
&&\quad\lesssim \sum_{l}\int_{Q_l}M_{L\log L}v(y)dy\lesssim\Big(\sum_l|Q_l|\Big)^{\frac{1}{2}}.
\end{eqnarray*}
This, via a standard duality argument, gives that
\begin{eqnarray}\label{eq2.10}\big|\{x\in\mathbb{R}^n:|{\rm V}_3(x)|>\frac{1}{6}\}\big|&\lesssim&\Big\|\sum_{l}\chi_{\mathbb{R}^n\backslash 5Q_l}\big(b(\cdot)-\langle b\rangle_{Q_l}\big)Th_l\Big\|_{L^2(\mathbb{R}^n)}^2\\
&\lesssim&\int_{\mathbb{R}^n}|f(y)|dy.\nonumber\end{eqnarray}
Combining the estimates (\ref{eq2.8})-(\ref{eq2.9}) and (\ref{eq2.10}) leads to our desired conclusion.
\end{proof}
For a Calder\'on-Zygmund operator $T$ and $b\in {\rm BMO}(\mathbb{R}^n)$, let $T^*_b$ be the maximal commutator defined by
$$T^*_bf(x)=\sup_{\epsilon>0}\Big|\int_{|x-y|>\epsilon}\big(b(x)-b(y)\big)K(x,\,y)f(y)dy\Big|,
$$
and $M_b$ the commutator defined by
$$M_bf(x)=\sup_{Q\ni x}\frac{1}{|Q|}\int_{Q}|b(x)-b(y)||f(y)|dy.$$

\begin{corollary}\label{co2.1} Let $T_1$ and $T_2$ be two Calder\'on-Zygmund operators. Then for  each $\lambda>0$, \begin{eqnarray}\label{equ2.10}&&\big|\{x\in\mathbb{R}^n:MT_2f(x)+T_1^*T_2f(x)>\lambda\}\big|\lesssim \int_{\mathbb{R}^n}\frac{|f(x)|}{\lambda}
\log\Big({\rm e}+\frac{|f(x)|}{\lambda}\Big)dx,\end{eqnarray}\begin{eqnarray}\label{eq2.12}\big|\{x\in\mathbb{R}^n:\,MT_{2,b}f(x)>\lambda\}\big|\lesssim \int_{\mathbb{R}^n}\frac{|f(x)|}{\lambda}
\log^2\Big({\rm e}+\frac{|f(x)|}{\lambda}\Big)dx,\end{eqnarray}
and for $r\in (0,\,1)$,
\begin{eqnarray}\label{eq2.11}&&\big|\{x\in\mathbb{R}^n:\,M_rT_1^*T_{2,b}f(x)>\lambda\}\big|\lesssim \int_{\mathbb{R}^n}\frac{|f(x)|}{\lambda}
\log^2\Big({\rm e}+\frac{|f(x)|}{\lambda}\Big)dx.\end{eqnarray}
\end{corollary}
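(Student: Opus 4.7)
All three estimates will be obtained by applying Lemma \ref{lem2.4} to a carefully chosen ``outer'' sublinear operator $U$. For (\ref{equ2.10}), I would take successively $U = M$ and $U = T_1^*$. Both are sublinear, bounded on $L^2(\mathbb{R}^n)$, and of weak type $(1,1)$, hence satisfy the hypothesis of Lemma \ref{lem2.4} with $\beta = 0$. Applying (\ref{eq2.4}) with $T = T_2$ produces a weak $L\log L$ bound for each of $M T_2 f$ and $T_1^* T_2 f$, and a union bound then yields (\ref{equ2.10}). For (\ref{eq2.12}) I would apply (\ref{eq2.5}) of Lemma \ref{lem2.4} directly with $U = M$ and $T = T_2$; the exponent in the conclusion becomes $\beta + 2 = 2$, which is precisely the power of the logarithm claimed for $M T_{2,b}$.

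The inequality (\ref{eq2.11}) is the delicate one. The plan is to view the composition as $M_r T_1^* T_{2,b} = U\,T_{2,b}$ with $U := M_r T_1^*$, and to apply (\ref{eq2.5}) of Lemma \ref{lem2.4} with $T = T_2$ and $\beta = 0$. This requires $U$ to be sublinear, bounded on $L^2$, and of weak type $(1,1)$. (Quasi-)sublinearity is immediate from $|f+g|^r \le |f|^r + |g|^r$ for $r \in (0,1)$. The $L^2$ bound is routine: since $2/r > 1$ and $T_1^*$ is $L^2$-bounded,
\[
\|M_r T_1^* f\|_{L^2} = \bigl\| M|T_1^* f|^r \bigr\|_{L^{2/r}}^{1/r} \lesssim \|T_1^* f\|_{L^2} \lesssim \|f\|_{L^2}.
\]

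The main obstacle, then, is the weak-$(1,1)$ bound for $M_r T_1^*$. I expect that the naive pointwise inequality $M_r g \le M g$ (valid for $r < 1$ by Jensen's inequality) is of no use here, because $M T_1^*$ itself is not of weak type $(1,1)$. Instead, I would argue in the Lorentz scale. Since $T_1^*$ is of weak type $(1,1)$, $|T_1^* f|^r$ belongs to $L^{1/r,\infty}(\mathbb{R}^n)$ with quasinorm $\lesssim \|f\|_{L^1}^r$; and since $1/r > 1$, the Hardy--Littlewood maximal operator is bounded on $L^{1/r,\infty}(\mathbb{R}^n)$, so $M(|T_1^* f|^r)$ lies in $L^{1/r,\infty}$ with the same norm bound. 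Reading off the distribution function after taking the $1/r$-th power gives
\[
\bigl|\{x \in \mathbb{R}^n : M_r T_1^* f(x) > \lambda\}\bigr| \lesssim \|f\|_{L^1}/\lambda.
\]
With this weak-$(1,1)$ bound in hand, Lemma \ref{lem2.4} (\ref{eq2.5}) applied to $U = M_r T_1^*$ with $\beta = 0$ produces exactly the $\log^2$ power that appears in (\ref{eq2.11}).
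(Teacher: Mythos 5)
Your treatments of (\ref{equ2.10}) and (\ref{eq2.12}) are exactly what the paper does: both follow from Lemma~\ref{lem2.4} applied directly with $U=M$, $U=T_1^*$ (and a union bound), respectively $U=M$ and estimate (\ref{eq2.5}). For (\ref{eq2.11}) you take a genuinely different route. The paper first applies Lemma~\ref{lem2.4} with $U=T_1^*$ to get the weak $L(\log L)^2$ bound for $T_1^*T_{2,b}$, and then \emph{post}-composes with $M_r$ using the Kolmogorov-type distributional inequality (\ref{eq2.xx}) of Hu and Li: since $s\mapsto s\,|\{|h|>s\}|$ is controlled by a decreasing function under the weak $L(\log L)^2$ hypothesis, the supremum in (\ref{eq2.xx}) is effectively attained at $s\approx 2^{-1/r}\lambda$, and the $\log^2$ bound transfers intact. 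You instead \emph{pre}-compose $M_r$ with $T_1^*$ and apply Lemma~\ref{lem2.4} once, with $U=M_rT_1^*$; the crux becomes the weak $(1,1)$ bound for $M_rT_1^*$, which you derive correctly via the boundedness of $M$ on $L^{1/r,\infty}$ for $1/r>1$ combined with the weak $(1,1)$ bound for $T_1^*$. Both arguments are sound and of comparable length; yours substitutes a standard Lorentz-space fact for the citation of (\ref{eq2.xx}), which some may find more self-contained. One small point worth making explicit: Lemma~\ref{lem2.4} is stated for sublinear $U$, while $M_r$ (hence $M_rT_1^*$) is only \emph{quasi}-sublinear when $r<1$. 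A glance at the proof of Lemma~\ref{lem2.4} shows this is harmless --- sublinearity is used only to split $U(T_bh)$ into three pieces, so a fixed multiplicative constant is absorbed --- but you should say so rather than treat it as immediate.
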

\begin{proof}
The inequalities (\ref{equ2.10}) and (\ref{eq2.12}) follows from Lemma \ref{lem2.4} directly. Also, we know from Lemma \ref{lem2.4}  that
\begin{eqnarray*}&&\big|\{x\in\mathbb{R}^n:\,T_{1}^*T_{2,b}f(x)>\lambda\}\big|\lesssim \int_{\mathbb{R}^n}\frac{|f(x)|}{\lambda}
\log^2\Big({\rm e}+\frac{|f(x)|}{\lambda}\Big)dx.\end{eqnarray*}Recall that for $r\in (0,\,1)$,
\begin{eqnarray}\label{eq2.xx}\big|\{x\in\mathbb{R}^n:
M_{r}f(x)>\lambda\}\big|\lesssim\lambda^{-1}\sup_{s\geq 2^{-\frac{1}{r}}\lambda}s\big|\{x\in\mathbb{R}^n:
|f(x)|>s\}|.
\end{eqnarray}
see \cite[p. 651]{huli}. Combining the last two inequalities
establishes (\ref{eq2.11}).
\end{proof}

We are now ready to establish our main conclusion in this section.
\begin{theorem}\label{thm2.1}
Let $T_1$, $T_2$ be two Calder\'on-Zygmund operators, $b\in{\rm BMO}(\mathbb{R}^n)$ with $\|b\|_{{\rm BMO}(\mathbb{R}^n)}=1$. Then for each bounded function $f$ with compact support and $\lambda>0$,
\begin{eqnarray}\label{eq2.13}
\big|\{x\in\mathbb{R}^n:\,\mathcal{M}_{T_1 T_2}^{*}f(x)>\lambda\}\big|\lesssim \int_{\mathbb{R}^n}\frac{|f(x)|}{\lambda}
\log\Big({\rm e}+\frac{|f(x)|}{\lambda}\Big)dx,
\end{eqnarray}
and
\begin{eqnarray}\label{eq2.14}
\big|\{x\in\mathbb{R}^n:\,\mathcal{M}_{T_1 T_{2,b}}^{*}f(x)>\lambda\}\big|\lesssim \int_{\mathbb{R}^n}\frac{|f(x)|}{\lambda}
\log^2\Big({\rm e}+\frac{|f(x)|}{\lambda}\Big)dx.
\end{eqnarray}
\end{theorem}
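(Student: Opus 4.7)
The plan is to establish a pointwise domination
\[
\mathcal{M}_{T_1T_2}^{*}f(x)\lesssim T_1^*(T_2 f)(x)+M(T_2 f)(x)+M_{L(\log L)}f(x)+Mf(x)
\]
and the analogous bound for $\mathcal{M}_{T_1T_{2,b}}^{*}f$, with $T_2$ replaced by $T_{2,b}$ and one additional logarithm on the local maximal piece, so that (\ref{eq2.13}) and (\ref{eq2.14}) follow by assembling Corollary~\ref{co2.1}, Lemma~\ref{lem2.4}, (\ref{eq1.10}), and the classical weak $(1,1)$ bound for $M$. The point of departure is, for a cube $Q\ni x$ and $\xi\in Q$, the linearity identity
\[
T_2(f\chi_{\mathbb{R}^n\backslash 9Q})=T_2 f-T_2(f\chi_{9Q}),
\]
which yields
\[
|T_1(\chi_{\mathbb{R}^n\backslash 3Q}T_2(f\chi_{\mathbb{R}^n\backslash 9Q}))(\xi)|\le|T_1(\chi_{\mathbb{R}^n\backslash 3Q}T_2 f)(\xi)|+|T_1(\chi_{\mathbb{R}^n\backslash 3Q}T_2(f\chi_{9Q}))(\xi)|.
\]

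For (\ref{eq2.13}), the first term on the right is by definition bounded by the grand maximal operator $\mathcal{M}_{T_1}(T_2 f)(x)$, and Lerner's estimate (\ref{eq2.1}) gives $\mathcal{M}_{T_1}(T_2 f)(x)\lesssim T_1^*(T_2 f)(x)+M(T_2 f)(x)$. For the second term I would further split $\mathbb{R}^n\backslash 3Q=(9Q\backslash 3Q)\cup(\mathbb{R}^n\backslash 9Q)$. On $\mathbb{R}^n\backslash 9Q$, the size estimates $|K_1(\xi,y)|\lesssim|y-x_Q|^{-n}$ and $|K_2(y,z)|\lesssim|y-x_Q|^{-n}$ (valid for $z\in 9Q$ and $y$ far from $Q$) combine through an annular decomposition to produce a contribution $\lesssim\langle|f|\rangle_{9Q}\lesssim Mf(x)$; on $9Q\backslash 3Q$, the crude estimate $|K_1(\xi,y)|\lesssim\ell(Q)^{-n}$ reduces matters to controlling $\ell(Q)^{-n}\int_{9Q}|T_2(f\chi_{9Q})|$, which by the classical $L\log L\to L^1$ bound for Calder\'on--Zygmund operators is $\lesssim\|f\|_{L(\log L),9Q}\lesssim M_{L(\log L)}f(x)$. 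Taking ess sup over $\xi$ and sup over $Q\ni x$ yields the claimed pointwise bound, and (\ref{eq2.13}) follows from Corollary~\ref{co2.1} and (\ref{eq1.10}).

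For (\ref{eq2.14}), the same scheme applies with $T_2$ replaced by $T_{2,b}$. The first summand is controlled by $\mathcal{M}_{T_1}(T_{2,b}f)(x)\lesssim T_1^*(T_{2,b}f)(x)+M(T_{2,b}f)(x)$, and each piece has the required weak $L(\log L)^2$ endpoint by Lemma~\ref{lem2.4} and (\ref{eq2.12}). On $9Q\backslash 3Q$ I would invoke the commutator analogue of the Zygmund inequality, namely $\tfrac{1}{|Q|}\int_Q|T_{2,b}(f\chi_Q)|\lesssim\|f\|_{L(\log L)^2,Q}$, to generate an $M_{L(\log L)^2}f(x)$ term. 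On $\mathbb{R}^n\backslash 9Q$ I would first apply the identity
\[
T_{2,b}(f\chi_{9Q})=(b-\langle b\rangle_{9Q})T_2(f\chi_{9Q})-T_2((b-\langle b\rangle_{9Q})f\chi_{9Q})
\]
and estimate each of the two pieces on dyadic shells around $Q$, absorbing the factor $|b(y)-\langle b\rangle_{9Q}|$ via the John--Nirenberg inequality exactly as in the treatment of the term ${\rm V}_3$ in Lemma~\ref{lem2.4}. \emph{The main obstacle lies in this last step:} one must check that the geometric decay coming from the two kernel factors is strong enough to beat the BMO growth of $|b(y)-\langle b\rangle_{9Q}|$ on far-away shells, so that the full tail contribution lands inside $M_{L(\log L)^2}f(x)+M_{L(\log L)}f(x)+Mf(x)$, all of which have the required weak $L(\log L)^2$ endpoint by (\ref{eq1.10}).
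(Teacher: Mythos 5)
Your plan follows the same overall structure as the paper's proof --- split the non-local data into a ``global'' piece controlled by the grand maximal operator $\mathcal{M}_{T_1}$ of the inner function, plus a ``local'' piece living on $9Q$ --- but you take a different route through the local piece. The paper first applies (\ref{eq2.1}) to $\mathcal{M}_{T_1}\bigl(T_2(f\chi_{\mathbb{R}^n\backslash 9Q})\bigr)$ at a.\,e.\ $z\in Q$, takes an $L^{1/2}$-average over $Q$, and only \emph{then} splits $f=f\chi_{9Q}+f\chi_{\mathbb{R}^n\backslash 9Q}$. The local contributions then appear as $L^{1/2}$-averages of the composite operators $T_1^*T_2(f\chi_{9Q})$ and $M\bigl(T_2(f\chi_{9Q})\bigr)$ over $Q$, which Lemma~\ref{lem2.2} controls purely from the weak $L(\log L)$-type endpoints in Corollary~\ref{co2.1}, with no kernel analysis at all; the cost is the $M_{1/2}$-layer in (\ref{eq2.15}) and (\ref{eq2.19}), and hence the need for the $M_r$-variant in Corollary~\ref{co2.1}. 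Your version splits $T_2(f\chi_{\mathbb{R}^n\backslash 9Q})=T_2f-T_2(f\chi_{9Q})$ \emph{before} $T_1$ acts, so you get $T_1^*(T_2f)$ outright without the $M_{1/2}$-detour, but you pay by having to unpack the kernel $K_1$ against $T_2(f\chi_{9Q})$ on $\mathbb{R}^n\backslash 3Q$.

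Two steps in that unpacking need tightening. On $\mathbb{R}^n\backslash 9Q$, the size bound $|K_2(y,z)|\lesssim|y-x_Q|^{-n}$ holds only when $y$ is well separated from $9Q$; when $y$ lies just outside $\partial(9Q)$ and $z$ just inside, $|y-z|$ is small and $T_2(f\chi_{9Q})(y)$ can be large. Fubini gives the inner integral $\int_{\mathbb{R}^n\backslash 9Q}|x_Q-y|^{-n}|y-z|^{-n}\,dy\approx\ell(Q)^{-n}\log\bigl({\rm e}\,\ell(Q)/{\rm dist}(z,\partial(9Q))\bigr)$, and this logarithmic singularity pushes the contribution up to $\|f\|_{L\log L,\,9Q}\lesssim M_{L\log L}f(x)$ (via $\exp L$--$L\log L$ duality), \emph{not} $\langle|f|\rangle_{9Q}\lesssim Mf(x)$: taking $f$ concentrated on a thin shell near $\partial(9Q)$ defeats the claimed $Mf$-bound. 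Your announced pointwise inequality survives because it already carries the $M_{L\log L}f$ term, but the derivation as written is not correct. In (\ref{eq2.14}) the same near-boundary issue reappears, now compounded with the ``main obstacle'' you explicitly leave unverified. That check is in fact routine --- on the $k$-th dyadic shell $\int|b-\langle b\rangle_{9Q}|$ grows only linearly in $k$ while the two kernel factors decay geometrically, and the factor $|b(z)-\langle b\rangle_{9Q}|\,|f(z)|$ on $9Q$ is absorbed into $\|f\|_{L\log L,\,9Q}$ by duality --- but it must be carried out, and it is precisely the kind of kernel computation that the paper's Lemma~\ref{lem2.2} route is designed to avoid.
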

\begin{proof} We first consider the estimate (\ref{eq2.13}). By Corollary \ref{co2.1}, it suffices to prove that
\begin{eqnarray}\label{eq2.15}\mathcal{M}_{T_1 T_2}^{*}f(x)\lesssim M_{\frac{1}{2}}T_1^*T_2f(x)+M_{L\log L}f(x)+MT_2f(x).\end{eqnarray}
Let $x\in \mathbb{R}^n$ and $Q$ be a cube containing $x$.
A trivial computation involving (\ref{eq2.1}) leads to that for each $\xi\in Q$,
\begin{eqnarray*}
&&T_1\big(\chi_{\mathbb{R}^n\backslash 9Q}T_2(f\chi_{\mathbb{R}^n\backslash 9Q})\big)(\xi)\lesssim
\inf_{z\in Q}\mathcal{M}_{T_1}(T_2f\chi_{\mathbb{R}^n\backslash 9Q})(z)\\
&&\quad\lesssim\Big(\frac{1}{|Q|}\int_{Q}\Big(\mathcal{M}_{T_1}\big(T_2(f\chi_{\mathbb{R}^n\backslash 9Q})\big)(z)\Big)^{\frac{1}{2}}dz\Big)^2\\
&&\quad\lesssim\Big(\frac{1}{|Q|}\int_{Q}\big[T_1^*T_2f(z)\big]^{\frac{1}{2}}dz\Big)^2
+\Big(\frac{1}{|Q|}\int_{Q}\big[MT_2f(z)\big]^{\frac{1}{2}}dz\Big)^2\\
&&\qquad+\Big(\frac{1}{|Q|}\int_{Q}\big[T_1^*T_2(f\chi_{9Q})(\xi)\big]^{\frac{1}{2}}d\xi\Big)^2+\Big(\frac{1}{|Q|}
\int_{Q}\big[M(T_2f\chi_{9Q})(\xi)\big]^{\frac{1}{2}}d\xi\Big)^2.
\end{eqnarray*}
Recalling that $M_{\frac{1}{2}}Mh(x)\lesssim Mh(x)$, we know that
\begin{eqnarray*}
\Big(\frac{1}{|Q|}\int_{Q}\big(MT_2f(\xi)\big)^{\frac{1}{2}}d\xi\Big)^2\lesssim MT_2f(x).
\end{eqnarray*}
On the other hand, it follows from Lemma \ref{lem2.2} that
\begin{eqnarray*}
&&\Big(\frac{1}{|Q|}\int_{Q}\big[T_1^*T_2(f\chi_{9Q})(\xi)\big]^{\frac{1}{2}}d\xi\Big)^2
+\Big(\frac{1}{|Q|}\int_{Q}\big[M(T_2f\chi_{9Q})(\xi)\big]^{\frac{1}{2}}d\xi\Big)^2\\
&&\quad \lesssim\|f\|_{L\log L,\,9Q}\lesssim M_{L\log L}f(x).
\end{eqnarray*}
This establishes (\ref{eq2.15}).

We turn our attention to the inequality (\ref{eq2.14}). Again by Corollary \ref{co2.1}, it suffices to prove that
\begin{eqnarray}\label{eq2.19}\mathcal{M}_{T_1 T_{2,b}}^{*}f(x)\lesssim M_{\frac{1}{2}}T_1^*T_{2,b}f(x)+MT_{2,b}f(x)+M_{L(\log L)^2}f(x).\end{eqnarray}
Let $x\in \mathbb{R}^n$ and $Q$ be a cube containing $x$.
As in the proof of (\ref{eq2.15}), we have that for each $\xi\in Q$,
\begin{eqnarray*}
&&T_1\big(\chi_{\mathbb{R}^n\backslash 3Q}T_{2,b}(f\chi_{\mathbb{R}^n\backslash 9Q})\big)(\xi)\\
&&\quad\lesssim\Big(\frac{1}{|Q|}\int_{Q}\Big[\mathcal{M}_{T_1}\big(T_{2,b}(f\chi_{\mathbb{R}^n\backslash 9Q})\big)(z)\Big]^{\frac{1}{2}}dz\Big)^2\\
&&\quad\lesssim\Big(\frac{1}{|Q|}\int_{Q}\Big[T_1^*\big(T_{2,b}(f\chi_{\mathbb{R}^n\backslash 9Q})\big)(z)\Big]^{\frac{1}{2}}dz\Big)^2\\
&&\qquad+\Big(\frac{1}{|Q|}\int_{Q}\Big[M\big(T_{2,b}(f\chi_{\mathbb{R}^n\backslash 9Q})\big)(z)\Big]^{\frac{1}{2}}dz\Big)^2={\rm I}+{\rm II}.
\end{eqnarray*}
A trivial computation involving the John-Nirenberg inequality and Lemma \ref{lem2.2} shows  that
\begin{eqnarray*}
{\rm I}&\lesssim&\Big(\frac{1}{|Q|}\int_{Q}\big[T_{1}^*T_{2,b}f(z)\big]^{\frac{1}{2}}dz\Big)^2
+\Big(\frac{1}{|Q|}\int_{Q}|T_1^* T_{2,b}(f\chi_{9Q})(z)|^{\frac{1}{2}}dz\Big)^2\\
&\lesssim&M_{\frac{1}{2}}T_{1}^*T_{2,b}f(x)+M_{L(\log L)^2}f(x).
\end{eqnarray*}
Similarly, we can obtain that
$${\rm II}\lesssim MT_{2,b}f(x)+M_{L(\log L)^2}f(x).
$$
Combining the estimates above yields (\ref{eq2.19}).
\end{proof}

\section{Proof of Theorem \ref{thm1.1}}
Recall that  the standard dyadic grid in $\mathbb{R}^n$ consists of all cubes of the form $$2^{-k}([0,\,1)^n+j),\,k\in  \mathbb{Z},\,\,j\in\mathbb{Z}^n.$$
Denote the standard grid by $\mathcal{D}$. For a fixed cube $Q$, denote by $\mathcal{D}(Q)$ the set of dyadic cubes with respect to $Q$, that is, the cubes from $\mathcal{D}(Q)$ are formed by repeating subdivision of $Q$ and each of descendants into $2^n$ congruent subcubes.

As usual, by a general dyadic grid $\mathscr{D}$,  we mean a collection of cube with the following properties: (i) for any cube $Q\in \mathscr{D}$, its side length $\ell(Q)$ is of the form $2^k$ for some $k\in \mathbb{Z}$; (ii) for any cubes $Q_1,\,Q_2\in \mathscr{D}$, $Q_1\cap Q_2\in\{Q_1,\,Q_2,\,\emptyset\}$; (iii) for each $k\in \mathbb{Z}$, the cubes of side length $2^k$ form a partition of $\mathbb{R}^n$.

Let $\eta\in (0,\,1)$ and $\mathcal{S}=\{Q_j\}$ be a family of cubes. We say that $\mathcal{S}$ is $\eta$-sparse,  if for each fixed $Q\in \mathcal{S}$, there exists a measurable subset $E_Q\subset Q$, such that $|E_Q|\geq \eta|Q|$ and $E_{Q}$'s are pairwise disjoint.
Associated with  the sparse family $\mathcal{S}$ and $\beta\in [0,\,\infty)$, define the sparse operator $\mathcal{A}_{\mathcal{S}, L(\log L)^{\beta}}$ by
$$\mathcal{A}_{\mathcal{S},\,L(\log L)^{\beta}}f(x)=\sum_{Q\in\mathcal{S}}\|f\|_{L(\log L)^{\beta},\,Q}\chi_{Q}(x).$$
It was proved in \cite{hu} that for $p\in (1,\,\infty)$, $\epsilon\in (0,\,1]$ and weight $u$,
\begin{eqnarray}\label{eq3.1}\|\mathcal{A}_{\mathcal{S},\,L(\log L)^{\beta}}g\|_{L^p(\mathbb{R}^n,\,u)}\lesssim p'^{1+\beta}p^2\big(\frac{1}{\epsilon}\big)^{\frac{1}{p'}} \|g\|_{L^p(\mathbb{R}^n,\,M_{L(\log L)^{p-1+\epsilon}}u)}.\end{eqnarray}
Moreover, for any $p\in (1,\,\infty)$ and $w\in A_{p}(\mathbb{R}^n)$,
\begin{eqnarray}\label{en3.2}\|\mathcal{A}_{\mathcal{S},L(\log L)^{\beta}}f\|_{L^p(\mathbb{R}^n,\,w)}\lesssim [w]_{A_p}^{\frac{1}{p}}\big([w]_{A_{\infty}}^{\frac{1}{p'}}+[\sigma]_{A_{\infty}}^{\frac{1}{p}}\big)[\sigma]_{A_{\infty}}^{\beta}
\|f\|_{L^p(\mathbb{R}^n,\,w)}.\end{eqnarray}

For  sparse family $\mathcal{S}$ and constants $\beta_1$, $\beta_2\in[0,\,\infty)$, we define the bi-sublinear sparse operator $\mathcal{A}_{\mathcal{S};\,L(\log L)^{\beta_1},\,L(\log L)^{\beta_2}}$  by
$$\mathcal{A}_{\mathcal{S};\,L(\log L)^{\beta_1},L(\log L)^{\beta_2}}(f,g)=\sum_{Q\in\mathcal{S}}|Q|\|f\|_{L(\log L)^{\beta_1},\,Q}\|g\|_{L(\log L)^{\beta_2},\,Q}.$$
We denote $\mathcal{A}_{\mathcal{S};\,L(\log L)^1,L(\log L)^{\beta_2}}$ by $\mathcal{A}_{\mathcal{S};\,L\log L,L(\log L)^{\beta_2}}$,
and  $\mathcal{A}_{\mathcal{S};\,L(\log L)^{\beta_1},L(\log L)^{0}}$ by $\mathcal{A}_{\mathcal{S};\,L(\log L)^{\beta_1},L}$.
The following lemma is a weighted version of Proposition 6 in \cite{bb}.
\begin{lemma}\label{lem3.2}Let  $\beta_1,\,\beta_2\in \mathbb{N}\cup\{0\}$ and  $U$ be a  linear operator. Suppose that for any bounded function $f$ with compact support, there exists a sparse family of cubes $\mathcal{S}$, such that for any function $g\in L^1(\mathbb{R}^n)$,
\begin{eqnarray}\label{en.sparse}\Big|\int_{\mathbb{R}^n}Uf(x)g(x)dx\Big|\leq \mathcal{A}_{\mathcal{S};\,L(\log L)^{\beta_1},\,L(\log L)^{\beta_2}}(f,\,g).\end{eqnarray}
Then for any $\epsilon\in (0,\,1)$ and weight $u$,
\begin{eqnarray*}&&u(\{x\in\mathbb{R}^n:\, |Uf(x)|>\lambda\})\\
&&\quad\lesssim \frac{1}{\epsilon^{1+\beta_1}}
\int_{\mathbb{R}^n}\frac{|f(x)|}{\lambda}\log ^{\beta_1}\Big({\rm e}+\frac{|f(x)|}{\lambda}\Big)M_{L(\log L)^{\beta_2+\epsilon}}u(x)dx.\nonumber\end{eqnarray*}
\end{lemma}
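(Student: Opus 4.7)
The plan is to derive the weighted weak-type estimate from the bilinear sparse domination in three stages: normalize and dualize against a test function supported on the level set, absorb the $u$-factor into an Orlicz maximal function with an $\epsilon$-improvement, and establish a weighted $L^1$-type bound for the resulting linear sparse operator.

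By homogeneity, I take $\lambda=1$ and set $E:=\{x\in\mathbb{R}^n:|Uf(x)|>1\}$. After a standard truncation argument I may assume $u(E)<\infty$. Choose $g(x):=\mathrm{sgn}(Uf(x))\chi_E(x)u(x)\in L^1(\mathbb{R}^n)$; Chebyshev's inequality together with (\ref{en.sparse}) gives
$$u(E)\leq \int_E|Uf|u\,dx=\int Uf\cdot g\,dx\leq \sum_{Q\in\mathcal{S}}|Q|\,\|f\|_{L(\log L)^{\beta_1},Q}\,\|\chi_E u\|_{L(\log L)^{\beta_2},Q}.$$
For every $Q$ and every $x\in Q$, monotonicity of the Luxemburg norm in the Young function yields $\|\chi_E u\|_{L(\log L)^{\beta_2},Q}\leq \|u\|_{L(\log L)^{\beta_2},Q}\leq M_{L(\log L)^{\beta_2+\epsilon}}u(x)$. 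Since the left-hand side is independent of $x$, multiplying by $|Q|$ and integrating over $Q$ gives $|Q|\,\|\chi_E u\|_{L(\log L)^{\beta_2},Q}\leq \int_Q M_{L(\log L)^{\beta_2+\epsilon}}u\,dx$, and inserting this into the display above yields
$$u(E)\leq \int_{\mathbb{R}^n}\mathcal{A}_{\mathcal{S},L(\log L)^{\beta_1}}f(x)\,v(x)\,dx,\qquad v:=M_{L(\log L)^{\beta_2+\epsilon}}u.$$

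It remains to establish
$$\int \mathcal{A}_{\mathcal{S},L(\log L)^{\beta_1}}f\cdot v\,dx\lesssim \frac{1}{\epsilon^{1+\beta_1}}\int |f|\log^{\beta_1}(\mathrm{e}+|f|)\,v\,dx.$$
My plan is to run a principal-cube stopping-time decomposition of $\mathcal{S}$ using a doubling threshold for the averages $\langle v\rangle_Q$: group the cubes $Q\in\mathcal{S}$ according to their nearest principal ancestor $P\in\mathcal{F}$, then within each tree sum the geometric series in $\langle v\rangle$ and invoke sparseness to collapse the inner sum into a Carleson-type integral. The Orlicz norm $\|f\|_{L(\log L)^{\beta_1},Q}$ is linearized against $|f|\log^{\beta_1}(\mathrm{e}+|f|)$ by a generalized Young inequality at the cube level, and the constant $\epsilon^{-(1+\beta_1)}$ emerges from the quantitative Coifman--Rochberg-type property of the Orlicz maximal weight $v=M_{L(\log L)^{\beta_2+\epsilon}}u$, whose $A_\infty$ character is controlled in terms of $\epsilon$.

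The main obstacle is pinning the constant in the last step to exactly $\epsilon^{-(1+\beta_1)}$ without inflating the logarithmic exponent on the right-hand side. A naive combination of sparseness with Fefferman--Stein-type pairings produces the weaker estimate $\int |f|\log^{\beta_1+1}(\mathrm{e}+|f|)\,M_{L(\log L)^{\beta_2+1}}u\,dx$, losing one extra logarithm and one unit in the subscript of the maximal function. Achieving the sharp dependence requires running the stopping time with respect to $v$ rather than $u$, so that the $\epsilon$-gain in $v$'s Orlicz maximal function is spent directly in the Young pairing at the cube level and telescopes cleanly through the tree sum, matching the conjugate Young function used to linearize $\|f\|_{L(\log L)^{\beta_1},Q}$ so that no additional logarithmic factor is paid.
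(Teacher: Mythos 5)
Your reduction stalls at a structural obstruction that cannot be repaired by sharpening the constants. After testing against $g=\mathrm{sgn}(Uf)\chi_E u$ and discarding the $\chi_E$ factor, you arrive at the inequality
\begin{equation*}
u(E)\le \int_{\mathbb{R}^n}\mathcal{A}_{\mathcal{S},L(\log L)^{\beta_1}}f(x)\,v(x)\,dx,\qquad v=M_{L(\log L)^{\beta_2+\epsilon}}u,
\end{equation*}
and then propose to close the argument with the \emph{strong} $(1,1)$-type pairing $\int \mathcal{A}_{\mathcal{S},L(\log L)^{\beta_1}}f\cdot v\lesssim \epsilon^{-(1+\beta_1)}\int|f|\log^{\beta_1}(\mathrm{e}+|f|)v$, with the same weight $v$ on both sides. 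This inequality is false. Already in the simplest case $\beta_1=\beta_2=0$ and $u\equiv 1$ (so $v\approx 1$) it would assert that the linear sparse operator $\mathcal{A}_{\mathcal{S}}$ maps $L^1$ to $L^1$. But a sparse operator built from a nested chain of cubes $Q_k\uparrow\mathbb{R}^n$ applied to $f=\chi_{[0,1]^n}$ has $\int\mathcal{A}_{\mathcal{S}}f=\sum_k|Q_k\cap[0,1]^n|=\infty$ while $\|f\|_{L^1}=1$: sparse operators are of weak type $(1,1)$ but never of strong type $(1,1)$. No stopping-time decomposition with respect to $v$ nor any Coifman--Rochberg-type improvement of $v$ can rescue this, because the deficiency is not quantitative but qualitative; it is inherited from the fact that the pairing $\int_E|Uf|u$ is a strong $L^1(u)$-norm of $Uf$ over $E$, which is not controlled by the weak-type information one is trying to prove. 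You yourself notice that the ``naive'' route loses a logarithm, but the loss is actually a divergence, and the proposed fix is only a sketch that does not address the root cause.

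The paper's proof takes a genuinely different path, and the difference is essential. Rather than dualizing immediately, it performs a Calder\'on--Zygmund decomposition of $f$ at the level set $\{M_{\mathscr{D},L(\log L)^{\beta_1}}f>1\}$, producing $f_1$ bounded, $f_2$ supported in the stopping cubes $\cup_jQ_j$, and a flattened replacement $f_3$. The weight of the exceptional set $E=\cup_j 4nQ_j$ is handled outright by a maximal inequality, and on $\mathbb{R}^n\setminus E$ the paper uses the weighted $L^{p_1}$--$L^{p_1'}$ estimate (3.3)/(\ref{eq3.12new}) at an exponent $p_1=1+\tfrac{\epsilon}{2(\beta_2+1)}$ close to $1$, where the boundedness of $f_1,f_3$ lets $\|f_i\|_{L^{p_1}}^{p_1}$ collapse to an $L^1$-type quantity. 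The constant $\epsilon^{-(1+\beta_1)}$ emerges explicitly from the $p_1'^{1+\beta_1}\bigl(p_1^2(1/\epsilon)^{1/p_1'}\bigr)^{\beta_2+1}$ factor as $p_1\to 1^+$. That $L^{p_1}$-duality step, applied only to the bounded pieces of $f$ and away from $E$, is precisely what replaces the false strong $(1,1)$ estimate you would need. Your Stage 1 (dualize against $\chi_E u$) therefore has to be abandoned in favor of decomposing $f$ first and dualizing at $p_1>1$.
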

\begin{proof}Let $f$ be a bounded function with compact support, and $\mathcal{S}$ be the sparse family such that (\ref{en.sparse}) holds true. By the one-third trick (see \cite[Lemma 2.5]{hlp}), we may assume that $\mathcal{S}\subset \mathscr{D}$ with $\mathscr{D}$ a dyadic grid. Let $\epsilon\in (0,\,1)$ and $u$ be a weight. It was proved in \cite[pp. 618-619]{hp2} that
\begin{eqnarray*}&&
\|Mg\|_{L^{p'}(\mathbb{R}^n,\,( M_{L(\log L)^{p-1+\epsilon}}u)^{1-p'})}\lesssim p^2\big(\frac{1}{\epsilon}\big)^{\frac{1}{p'}}\|g\|_{L^{p'}(\mathbb{R}^n,u^{1-p'})}.
\end{eqnarray*}
Repeating the argument in the proof of Theorem 1.8 in \cite{lpr}, we know that for if  $p\in (1,\,\infty)$ and 
$\|f\|_{L^{p'}(\mathbb{R}^n,\,M_{L(\log L)^{(\beta_2+1)p-1+\epsilon}}u)}=1$, then
\begin{eqnarray*}
&&\mathcal{A}_{\mathcal{S};\,L(\log L)^{\beta_1},\,L(\log L)^{\beta_2}}(f,\,g)\\
&&\quad\lesssim p'^{1+\beta_1}\|M_{L(\log L)^{\beta_2}}g\|_{L^{p'}(\mathbb{R}^n,\big(M_{L(\log L)^{(\beta_2+1)p-1+\epsilon}}u\big)^{1-p'})}\\
&&\quad\lesssim p'^{1+\beta_1}[p^2\big(\frac{1}{\epsilon}\big)^{\frac{1}{p'}}]^{\beta_2+1}\|g\|_{L^{p'}(\mathbb{R}^n,\,u^{1-p'})},
\end{eqnarray*}
since $M_{L(\log L)^{\beta_2}}g(x)\approx M^{\beta_2+1}g(x)$, see \cite{cana}.
This, via homogeneity, states that
\begin{eqnarray}\label{eq3.12new}
&&\mathcal{A}_{\mathcal{S};\,L(\log L)^{\beta_1},\,L(\log L)^{\beta_2}}(f,\,g)\\
&&\quad\lesssim p'^{1+\beta_1}[p^2\big(\frac{1}{\epsilon}\big)^{\frac{1}{p'}}]^{\beta_2+1}\|f\|_{L^{p'}(\mathbb{R}^n,\,M_{L(\log L)^{(\beta_2+1)p-1+\epsilon}}u)}\|g\|_{L^{p'}(\mathbb{R}^n,\,u^{1-p'})}.\nonumber
\end{eqnarray}

Now let  $M_{\mathscr{D}, L(\log L)^{\beta_1}}$ be the maximal operator defined by
$$M_{\mathscr{D},L(\log L)^{\beta_1}}f(x)=\sup_{Q\ni x,\,Q\in\mathscr{D}}\|f\|_{L(\log L)^{\beta_1},\,Q}.$$
Decompose the set $\{x\in\mathbb{R}^n:\,M_{\mathscr{D},L(\log L)^{\beta_1}}f(x)>1\}$ as
$$\{x\in\mathbb{R}^n:\,M_{\mathscr{D},L(\log L)^{\beta_1}}f(x)>1\}=\cup_{j}Q_j,$$
with $Q_j$ the maximal cubes in $\mathscr{D}$ such that $\|f\|_{L(\log L)^{\beta_1},\,Q_j}>1$. We have that
$$1 <\|f\|_{L(\log L)^{\beta_1},\,Q_j}\lesssim 2^n.$$Let
$$f_1(y) = f(y)\chi_{\mathbb{R}^n\backslash \cup_jQ_j}(y),\,\, f_2(y) =
\sum_j
f(y)\chi_{Q_j}(y),$$
and
$$f_3(y) =\sum_j\|f\|_{L(\log L)^{\beta_1},\,Q_j}\chi_{Q_j}(y).$$
It is obvious that $\|f_1\|_{L^{\infty}(\mathbb{R}^n)}\lesssim 1$ and $\|f_3\|_{L^{\infty}(\mathbb{R}^n)}\lesssim 1$. Take $p_1=1+\frac{\epsilon}{2(\beta_2+1)}$. It then follows from the inequality (\ref{eq3.12new}) that
\begin{eqnarray}\label{en3.4}
&&\mathcal{A}_{\mathcal{S};\,L(\log L)^{\beta_1},\,L(\log L)^{\beta_2}}(f_1,\,g)\\
&&\quad\lesssim p_1'^{1+\beta_1}[p_1^2\big(\frac{1}{\epsilon}\big)^{\frac{1}{p_1'}}]^{\beta_2+1}\|f_1\|_{L^{p_1}(\mathbb{R}^n,\,M_{L(\log L)^{(\beta_2+1)p_1-1+\epsilon}}u)}\|g\|_{L^{p_1'}(\mathbb{R}^n,\,u^{1-p'})}\nonumber\\
&&\quad\lesssim\frac{1}{\epsilon^{1+\beta_1}}\|f\|^{\frac{1}{p_1}}_{L^{1}(\mathbb{R}^n,\,M_{L(\log L)^{\beta_2+\epsilon}}u)}\|g\|_{L^{p_1'}(\mathbb{R}^n,\,u^{1-p'})}.\nonumber
\end{eqnarray}

Let $E=\cup_j4nQ_j$ and   $\widetilde{u}(y)=u(y)\chi_{\mathbb{R}^n\backslash E}(y).$ It is obvious that
\begin{eqnarray}\label{en3.5}u(E)\lesssim \sum_{j}\inf_{z\in Q_j}Mu(z)|Q_j|\lesssim \int_{\mathbb{R}^n}|f(y)|\log^{\beta_1}({\rm e}+|f(y)|)Mu(y)dy.
\end{eqnarray}
Moreover, by fact that $$\inf_{y\in Q_j}M_{L(\log L)^{\gamma}}\widetilde{u}(y)\approx\sup_{z\in Q_j}M_{L(\log L)^{\gamma}}\widetilde{u}(z),$$
we obtain that for $\gamma\in [0,\,\infty)$,
\begin{eqnarray}\label{en3.6}
\|f_3\|_{L^1(\mathbb{R}^n,\,M_{L(\log L)^{\gamma}}\widetilde{u})}
&\lesssim &\sum_{j}\inf_{z\in Q_j}M_{L(\log L)^{\gamma}}\widetilde{u}(z)|Q_j|\|f\|_{L(\log L)^{\beta_1},\,Q_j}\\
&\lesssim &\int_{\mathbb{R}^n}|f(y)|\log^{\beta_1} ({\rm e}+|f(y)|)M_{L(\log L)^{\gamma}}u(y){\rm d}y.\nonumber
\end{eqnarray}
Let $$\mathcal{S}^*=\{I\in\mathcal{S}:\, I\cap(\mathbb{R}^n\backslash E)\not =\emptyset\}.$$
Note that if ${\rm supp}\,g\subset \mathbb{R}^n\backslash E$, then
$$\mathcal{A}_{\mathcal{S}, L(\log L)^{\beta_1},\,L(\log L)^{\beta_2}}(f_2,\,g)=\mathcal{A}_{\mathcal{S}^*, L(\log L)^{\beta_1},\,L(\log L)^{\beta_2}}(f_2,\,g).$$
As in the argument in \cite[pp. 160-161]{hu}, we can verify that for each fixed $I\in \mathcal{S}^*$,
$$
\|f_2\|_{L(\log L)^{\beta_1},\,I}\lesssim \|f_3\|_{L(\log L)^{\beta_1},\,I}.
$$
Therefore, for $g\in L^1(\mathbb{R}^n)$ with ${\rm supp}\,g\subset \mathbb{R}^n\backslash E$,
\begin{eqnarray}\label{en3.7}&&\mathcal{A}_{\mathcal{S},L(\log L)^{\beta_1},\,L(\log L)^{\beta_2} }(f_2,\,g)\lesssim \mathcal{A}_{\mathcal{S}, L(\log L)^{\beta_1},\,L(\log L)^{\beta_2}}(f_3,\,g)\\
&&\qquad\lesssim\frac{1}{\epsilon^{1+\beta_1}}\|f_3\|^{\frac{1}{p_1}}_{L^{1}(\mathbb{R}^n,\,M_{L(\log L)^{\beta_2+\epsilon}}u)}\|g\|_{L^{p_1'}(\mathbb{R}^n,\,u^{1-p'})}.\nonumber
\end{eqnarray}
Inequalities (\ref{en3.4}) and (\ref{en3.7}) tells us that
\begin{eqnarray*}
&&\sup_{\|g\|_{L^{p_1'}(\mathbb{R}^n\backslash E, \widetilde{u}^{1-p_1'})}\leq 1}\Big|\int_{\mathbb{R}^n}Uf(x)g(x)dx\Big|\\
&&\quad\lesssim\sup_{\|g\|_{L^{p_1'}(\mathbb{R}^n\backslash E, \widetilde{u}^{1-p_1'})}\leq 1}\Big(\mathcal{A}_{\mathcal{S},\,L(\log L)^{\beta_1},L(\log L)^{\beta_2}}(f_1,g)+\mathcal{A}_{\mathcal{S},\,L(\log L)^{\beta_1},L(\log L)^{\beta_2}}(f_2,g)\\
&&\quad\lesssim\frac{1}{\epsilon^{1+\beta_1}}\Big(\|f\|^{\frac{1}{p_1}}_{L^{1}(\mathbb{R}^n,\,M_{L(\log L)^{\beta_2+\epsilon}}\widetilde{u})}+\|f_3\|^{\frac{1}{p_1}}_{L^{1}(\mathbb{R}^n,\,M_{L(\log L)^{\beta_2+\epsilon}}u)}\Big).
\end{eqnarray*}
Thus by  a duality argument,
\begin{eqnarray*}
&&u(\{x\in\mathbb{R}^n:\,|Uf(x)|>1\})\leq u(E)+\|Uf\|_{L^{p_1}(\mathbb{R}^n\backslash E,\,\widetilde{u})}^{p_1}\\
&&\quad\lesssim \frac{1}{\epsilon^{1+\beta_1}}\int_{\mathbb{R}^n}|f(y)|\log^{\beta_1}({\rm e}+|f(y)|)M_{L(\log L)^{\beta_2+\epsilon}}u(y)dy.\nonumber
\end{eqnarray*}
This, via homogeneity, leads to our desired conclusion.
\end{proof}
\begin{theorem}\label{thm3.1}Let $T_1$ and $T_2$ be Calder\'on-Zygmund operators, $b\in{\rm BMO}(\mathbb{R}^n)$ with $\|b\|_{{\rm BMO}(\mathbb{R}^n)}=1$.
\begin{itemize}
\item[\rm (i)] For  bounded function $f$ with compact support, there exists a $\frac{1}{2}\frac{1}{9^n}$-sparse family of cubes $\mathcal{S}=\{Q\}$, and functions $J_0$, $J_1$, such that for each $j=0,\,1$ and function $g$,
$$\Big|\int_{\mathbb{R}^n}J_j(x)g(x)dx\Big|\lesssim \mathcal{A}_{\mathcal{S};\,L(\log L)^{1-j},\,L(\log L)^j}(f,\,g);$$
and for a. e. $x\in\mathbb{R}^n$,
$$T_{1} T_2f(x)=J_0(x)+J_1(x).
$$
\item [\rm (ii)] For  bounded function $f$ with compact support, there exists a $\frac{1}{2}\frac{1}{9^n}$-sparse family of cubes $\mathcal{S}^*=\{Q\}$, and functions $H_0$, $H_1$ and $H_2$, such that for each $j=0,\,1,\,2$ and function $g$,
$$\Big|\int_{\mathbb{R}^n}H_j(x)g(x)dx\Big|\lesssim \mathcal{A}_{\mathcal{S};\,L(\log L)^{2-j},\,L(\log L)^j}(f,\,g);$$
and for a. e. $x\in\mathbb{R}^n$,
$$T_{1,b} T_2f(x)=H_0(x)+H_1(x)+H_2(x).
$$
\end{itemize}
\end{theorem}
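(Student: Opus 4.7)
The plan is a Lerner-type stopping-time recursion driven by the weak $L(\log L)$ endpoint estimates of Section 2. I concentrate on part (ii); part (i) follows by the same scheme with one fewer logarithm throughout.

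\emph{Step 1: Stopping construction.} By the one-third trick I may fix a dyadic grid $\mathscr{D}$ and work inside an initial cube $Q_0\in\mathscr{D}$ containing $3\,\mathrm{supp}\,f$. Fix $r\in(0,1)$ and define the control functional
$$\Phi(f,Q_0)(x) := \mathcal{M}^*_{T_1 T_{2,b}}(f\chi_{9Q_0})(x) + M_r T_1^* T_{2,b}(f\chi_{9Q_0})(x) + M T_{2,b}(f\chi_{9Q_0})(x) + M_{L(\log L)^2}(f\chi_{9Q_0})(x).$$
Each summand satisfies a weak $L(\log L)^2$ endpoint estimate (by Theorem 2.1, Corollary 2.1 and (1.10)); a standard Luxemburg-norm normalisation therefore yields an absolute constant $\Lambda$ for which
$$E(Q_0) := \{x\in Q_0 : \Phi(f,Q_0)(x) > \Lambda\,\|f\|_{L(\log L)^2, 9Q_0}\}$$
satisfies $|E(Q_0)|\le \tfrac12|Q_0|$. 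Let $\{P_j\}\subset \mathcal{D}(Q_0)$ be the maximal dyadic subcubes covering $E(Q_0)$ and iterate with each $P_j$ in place of $Q_0$; the resulting family $\mathcal{S}^*$ is $\tfrac12\tfrac{1}{9^n}$-sparse, where the $\tfrac12$ comes from the size of $E(Q_0)$ and the $9^{-n}$ from the bounded overlap of the dilates $9P_j$ used in the recursion.

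\emph{Step 2: Local decomposition and regrouping.} For each $Q\in\mathcal{S}^*$ I split
$$T_1 T_{2,b} f = T_1 T_{2,b}(f\chi_{9Q}) + T_1\bigl(\chi_{3Q} T_{2,b}(f\chi_{\mathbb{R}^n\setminus 9Q})\bigr) + T_1\bigl(\chi_{\mathbb{R}^n\setminus 3Q} T_{2,b}(f\chi_{\mathbb{R}^n\setminus 9Q})\bigr).$$
On $Q\setminus E(Q)$ the pointwise bound (2.19) controls the last two summands by $\Phi(f,Q)\le \Lambda\|f\|_{L(\log L)^2, 9Q}$, while the local summand feeds the next generation. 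Pairing the resulting telescoping expansion against a test function $g$ and dualising, the error terms at each scale $Q$ split into three types: (a) contributions from $\mathcal{M}^*_{T_1 T_{2,b}}$ and $M_{L(\log L)^2} f$ pair with $\langle g\rangle_Q$, forming $H_0$; (b) contributions from $MT_{2,b} f$, together with the BMO-weighted averages $|b-\langle b\rangle_Q||g|$ transferred to the $g$-side via John--Nirenberg and the Young-type inequality (2.7) as in the proof of Lemma 2.4, pair with $\|g\|_{L(\log L)^2, Q}$ and form $H_2$; (c) the remaining mixed $T_1^* T_{2,b}$ pieces pair with $\|g\|_{L\log L, Q}$ and form $H_1$. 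Summing over scales and using that $\|f\|_{L(\log L)^\beta, 9Q}$ is comparable to $\|f\|_{L(\log L)^\beta, Q}$ (with the bounded-overlap constant absorbed into the sparseness) yields $\sum_{j=0}^2 H_j = T_1 T_{2,b} f$ a.e.\ together with
$$\Big|\int_{\mathbb{R}^n} H_j(x) g(x)\,dx\Big| \lesssim \sum_{Q\in\mathcal{S}^*} |Q|\,\|f\|_{L(\log L)^{2-j}, Q}\,\|g\|_{L(\log L)^{j}, Q},\quad j=0,1,2.$$
The statement for $T_{1,b} T_2$ in the theorem is obtained from this by the adjoint identity $(T_{1,b} T_2)^* = -T_2^*(T_1^*)_b$, which swaps the roles of the two operators. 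Part (i) is identical with $b\equiv 0$: (2.13) replaces (2.14), (2.10) replaces (2.11)--(2.12), and only two functions $J_0, J_1$ with $L(\log L)^{1-j}$--$L(\log L)^{j}$ weights arise.

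\emph{Main obstacle.} The delicate step is the regrouping in Step 2: producing three (respectively two) genuinely linear functions $H_j$ (resp.\ $J_j$), each saturating an \emph{asymmetric} bi-sublinear sparse bound, rather than one symmetric bound. This forces one to track, at every scale of the recursion, which error pieces can be retained as pointwise averages of $f$ and which must be transferred to $g$ through duality. For the commutator, the Young inequality (2.7) dictates that $|b-\langle b\rangle_Q|$ is absorbed into an $L(\log L)^{\beta+1}$-average of $g$, and this absorption produces the extra logarithm distinguishing $H_0$, $H_1$, and $H_2$. Once this regrouping is settled, the stopping-time portion is a routine Calder\'on--Zygmund iteration.
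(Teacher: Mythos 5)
Your outline captures the right overall machinery (stopping times, local grand maximal operators, a sparse family from a Lerner-type recursion, and duality to transfer part of the mass to $g$), but there are two substantive gaps that the paper's proof handles in a fundamentally different way.

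\textbf{The ``near'' term is not controlled pointwise.} In Step 2 you claim that on $Q\setminus E(Q)$ the pointwise bound (2.19) controls \emph{both} of the last two summands in
$$T_1T_{2,b}f=T_1T_{2,b}(f\chi_{9Q})+T_1\bigl(\chi_{3Q}T_{2,b}(f\chi_{\mathbb{R}^n\setminus 9Q})\bigr)+T_1\bigl(\chi_{\mathbb{R}^n\setminus 3Q}T_{2,b}(f\chi_{\mathbb{R}^n\setminus 9Q})\bigr).$$
Only the third summand is of grand-maximal type; the grand maximal operator $\mathcal{M}^*_{T_1T_{2,b}}$ is, by definition, $\sup_Q\mathrm{ess\,sup}_{\xi\in Q}|T_1(\chi_{\mathbb{R}^n\setminus 3Q}\,T_{2,b}(f\chi_{\mathbb{R}^n\setminus 9Q}))(\xi)|$ and says nothing about $T_1(\chi_{3Q}\cdots)$. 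The middle, or ``near'', summand is a Calder\'on--Zygmund operator acting on a \emph{locally supported} bounded function, and such an expression need not be pointwise bounded on $Q$ (it has the usual logarithmic blow-up near the boundary of $3Q$). It can only be handled by the duality step: in the paper this is the $G_2$ term
$$G_2(x)=\sum_l T_{1,b}\bigl(\chi_{3P_l}T_2(f\chi_{9Q_0\setminus 9P_l})\bigr)(x)\chi_{P_l}(x),$$
which is paired against $g$, moved to $\widetilde{T}_{1,b}(g\chi_{P_l})$, and summed by Lemma 2.1 to produce the factor $\|g\|_{L(\log L)^2,Q}$. This duality is precisely what generates the $j\ge1$ pieces of the sparse form; it cannot be replaced by a pointwise bound.

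\textbf{The adjoint reduction does not produce the stated conclusion.} You prove the claim for $T_1T_{2,b}$ and then invoke $(T_{1,b}T_2)^*=-T_2^*(T_1^*)_b$ to ``swap the roles of the two operators.'' But the theorem states: \emph{given a bounded compactly supported $f$}, there exist a sparse family $\mathcal{S}^*$, depending on $f$, and functions $H_j$, depending on $f$, with $T_{1,b}T_2 f=H_0+H_1+H_2$ a.e.\ and the pairing bound valid for all $g$. Dualizing the statement you prove for $S_1S_{2,b}$ (with $S_1=T_2^*$, $S_2=T_1^*$) would give, for each $g$, a sparse family $\mathcal{S}_g$ and functions $H_j(g)$ depending on $g$, with the pairing bound valid for all $f$. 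This is not the same assertion: the sparse family would depend on the test function, which is exactly what Lemma 3.2's proof (a stopping-time decomposition of $f$) cannot accommodate, and it does not yield the pointwise decomposition of $T_{1,b}T_2f$ itself. The paper resolves this by working directly with $T_{1,b}T_2$, relying on the algebraic identity (valid for $x\in P_l$, $h=f\chi_{9Q_0\setminus 9P_l}$)
\begin{align*}
T_{1,b}T_2 h(x)&= T_{1,b}\bigl(\chi_{3P_l}T_2 h\bigr)(x)+(b(x)-\langle b\rangle_{Q_0})T_1\bigl(\chi_{\mathbb{R}^n\setminus 3P_l}T_2h\bigr)(x)\\
&\quad -T_1\bigl(\chi_{\mathbb{R}^n\setminus 3P_l}T_{2,b}h\bigr)(x)-T_1\bigl(\chi_{\mathbb{R}^n\setminus 3P_l}T_2((b-\langle b\rangle_{Q_0})h)\bigr)(x),
\end{align*}
whose four terms become the building blocks $G_2,G_1$ and the two pieces of $G_0$. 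The corresponding exceptional set then needs five, not four, stopping conditions; in particular the set $E_5$ (governed by $\mathcal{M}^*_{T_1T_2;Q_0}((b-\langle b\rangle_{Q_0})f)$) has no analogue in your functional $\Phi$. Without this identity the regrouping into three functions $H_0,H_1,H_2$ with the asymmetric $L(\log L)^{2-j}$--$L(\log L)^j$ weights is not justified. (As a smaller point, part (i) is not the case $b\equiv 0$ of part (ii): $T_{1,b}T_2\equiv 0$ when $b=0$, so (i) requires its own, simpler, run of the argument, which the paper supplies.)
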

\begin{proof}We only prove the conclusion (ii). The proof of conclusion (i) is similar, more simpler and will be omitted. We will employ the argument in \cite{ler2}. For a fixed cube $Q_0$, define the  local analogy of $\mathcal{M}_{T_2}$, $\mathcal{M}^*_{T_{1}T_2}$ and $\mathcal{M}^*_{T_{1}T_{2,b}}$ by
$$ \mathcal{M}_{T_2,\,Q_0}f(x)=\sup_{Q\ni x,\, Q\subset Q_0}{\rm ess}\sup_{\xi\in Q}|T_2(f\chi_{3Q_0\backslash 3Q})(\xi)|,$$
$$\mathcal{M}^*_{T_{1}T_2;\,Q_0}f(x)=\sup_{Q\ni x,\, Q\subset Q_0}{\rm ess}\sup_{\xi\in Q}|T_1\big(\chi_{\mathbb{R}^n\backslash 3Q}
T_2(f\chi_{9Q_0\backslash 9Q})\big)(\xi)|,$$
and
$$\mathcal{M}^*_{T_{1}T_{2,b};\,Q_0}f(x)=\sup_{Q\ni x,\, Q\subset Q_0}{\rm ess}\sup_{\xi\in Q}|T_1\big(\chi_{\mathbb{R}^n\backslash 3Q}
T_{2,b}(f\chi_{9Q_0\backslash 9Q})\big)(\xi)|,$$
respectively. Let $E=\cup_{j=1}^5E_j$ with
$$E_1=\big\{x\in Q_0:\, |T_{1,\,b}T_2(f\chi_{9Q_0})(x)|>D\|f\|_{L(\log L)^2,\,9Q_0}\big\},$$
$$E_2=\{x\in Q_0:\,\mathcal{M}_{T_2,\,Q_0}f(x)>D\langle |f|\rangle_{9Q_0}\},$$
$$E_3=\big\{x\in Q_0:\, \mathcal{M}^*_{T_{1}T_2;\,Q_0}f(x)>D\|f\|_{L\log L,9Q_0}\big\},$$and
$$E_4=\big\{x\in Q_0:\, \mathcal{M}^*_{T_{1}T_{2, b};\,Q_0}f(x)>D\|f\|_{L(\log L)^2,9Q_0}\big\},$$
$$E_5=\big\{x\in Q_0:\, \mathcal{M}^*_{T_{1}T_2;\,Q_0}\big((b-\langle b\rangle_{Q_0})f\big)(x)>D\|f\|_{L(\log L)^2,9Q_0}\big\},$$
with $D$ a positive constant. It then follows from Theorem \ref{thm2.1}, Corollary \ref{co2.1} and (\ref{eq1.10})  that
$$|E|\le \frac{1}{2^{n+2}}|Q_0|,$$
if we choose $D$ large enough. Now on the cube $Q_0$, we apply the Calder\'on-Zygmund decomposition to $\chi_{E}$ at level $\frac{1}{2^{n+1}}$, and obtain pairwise disjoint cubes $\{P_j\}\subset \mathcal{D}(Q_0)$, such that
$$\frac{1}{2^{n+1}}|P_j|\leq |P_j\cap E|\leq \frac{1}{2}|P_j|$$
and $|E\backslash\cup_jP_j|=0$.  Observe that $\sum_j|P_j|\leq \frac{1}{2}|Q_0|$.
Let
\begin{eqnarray*}
G_0(x)&=&T_{1,b} T_2(f\chi_{9Q_0})(x)\chi_{Q_0\backslash \cup_{l}P_l}(x)\\
&&-\sum_lT_{1}\Big(\chi_{\mathbb{R}^n\backslash 3P_l}T_{2,b}(f\chi_{9Q_0\backslash 9P_l})\Big)(x)\chi_{P_l}(x)\\
&&-\sum_lT_{1}\Big(\chi_{\mathbb{R}^n\backslash 3P_l}T_2\big((b-\langle b\rangle_{Q_0})f\chi_{9Q_0\backslash 9P_l}\big)\Big)(x)\chi_{P_l}(x).
\end{eqnarray*}
The facts that $P_j\cap E^c\not =\emptyset$ and $|E\backslash\cup_jP_j|=0$ imply that
\begin{eqnarray}\label{eq3.8}|G_0(x)|\lesssim \|f\|_{L(\log L)^2,\,9Q_0}.\end{eqnarray}
Also, we define functions $G_1$ and $G_2$ by
$$G_{1}(x)=\big(b(x)-\langle b\rangle_{Q_0}\big)\sum_lT_{1}\big(\chi_{\mathbb{R}^n\backslash 3P_l}T_2(f\chi_{9Q_0\backslash 9P_l})\big)(x)\chi_{P_l}(x),
$$
and
$$G_2(x)=\sum_lT_{1,\,b}\big(\chi_{3P_l}T_2\big(f\chi_{9Q_0\backslash 9P_l})\big)(x)\chi_{P_l}(x).
$$
Then
\begin{eqnarray}\label{eq3.9}|G_{1}(x)|\lesssim |b(x)-\langle b\rangle_{Q_0}|\|f\|_{L\log L,\,9Q_0}\chi_{Q_0}(x).
\end{eqnarray}
Let $\widetilde{T}_1$ be the adjoint operator of $T_1$ and $\widetilde{T}_{1,b}$ the commutator of $\widetilde{T}_1$. For each function $g$, we have by Lemma \ref{lem2.1} that
\begin{eqnarray}\label{eq3.10}
\Big|\int_{\mathbb{R}^n}G_{2}(x)g(x)dx\Big|&\le &\sum_l\int_{3P_l}\big|T_2\big(f\chi_{9Q_0\backslash 9P_l}\big)(x)\widetilde{T}_{1,b}(g\chi_{P_l})(x)\big|dx\\
&\lesssim&\sum_l\inf_{\xi\in P_l}\mathcal{M}_{T_2,\,Q_0}f(\xi)\int_{3P_l}|\widetilde{T}_{1,b}(g\chi_{P_l})(x)|dx\nonumber\\
&\lesssim&\langle |f|\rangle_{9Q_0}\|g\|_{L(\log L)^2,\,Q_0}|Q_0|.\nonumber
\end{eqnarray}
Moreover,
$$T_{1,b} T_2(f\chi_{9Q_0})(x)\chi_{Q_0}(x)=G_0(x)+G_1(x)+G_2(x)+\sum_{l}T_{1,b}T_2(\chi_{9P_l})(x)\chi_{P_l}(x).$$

We now repeat the argument above with $T_{1,b}T_2(f\chi_{9Q_0})(x)\chi_{Q_0}$ replaced by each $T_{1,b}T_2(\chi_{9P_l})(x)\chi_{P_l}(x)$, and so on.
Let $Q_{0}^{j_1}=P_{j}$,  and for fixed $j_1,\,\dots,\,j_{m-1}$, $\{Q_{0}^{j_1...j_{m-1}j_m}\}_{j_m}$ be the cubes obtained at the $m$-th stage of the decomposition process to the cube $Q_{0}^{j_1...j_{m-1}}$.
For each fixed $j_1\dots,j_m$, define the functions $H_{Q_0}^{j_1\dots j_m}f$, $H_{Q_0,1}^{j_1\dots j_m}f$ and  $H_{Q_0,2}^{j_1\dots j_m}f$ by
\begin{eqnarray*}
&&H_{Q_0,0}^{j_1\dots j_m}f(x)=-T_{1}\big(\chi_{\mathbb{R}^n\backslash 3Q_0^{j_1\dots j_m}}T_{2,b}(f\chi_{9Q_0^{j_1\dots j_{m-1}}\backslash 9Q_0^{j_1\dots j_m}})\big)(x)\chi_{Q_0^{j_1\dots j_m}}(x)\\
&&\qquad-T_{1}\Big(\chi_{\mathbb{R}^n\backslash 3Q_0^{j_1\dots j_m}}T_2\big((b-\langle b\rangle_{Q_0})f\chi_{9Q_0^{j_1\dots j_{m-1}}\backslash 9Q_0^{j_1\dots j_m}}\big)\Big)(x)\chi_{3Q_0^{j_1\dots j_m}}(x),
\end{eqnarray*}
\begin{eqnarray*}H_{Q_0,1}^{j_1\dots j_m}f(x)&=&\big(b(x)-\langle b\rangle_{Q_{0}^{j_1\dots j_{m-1}}}\big)\\
&&T_1\Big(\chi_{3Q_{0}^{j_1\dots j_m}}T_2(f\chi_{9Q_{0}^{j_1\dots j_{m-1}}\backslash 9Q_{0}^{j_1\dots j_m}})\big)\Big)(x)\chi_{Q_{0}^{j_1\dots j_m}}(x),
\end{eqnarray*}
and
$$H_{Q_0,2}^{j_1\dots j_m}f(x)=T_{1,b}\Big(\chi_{3Q_{0}^{j_1\dots j_m}}T_2(f\chi_{9Q_{0}^{j_1\dots j_{m-1}}\backslash 9Q_{0}^{j_1\dots j_m}})\big)\Big)(x)\chi_{Q_{0}^{j_1\dots j_m}}(x),
$$ respectively.
Set $\mathcal{F}=\{Q_0\}\cup_{m=1}^{\infty}\cup_{j_1,\dots,j_m}\{Q_{0}^{j_1\dots j_m}\}$. Then $\mathcal{F}\subset \mathcal{D}(Q_0)$ be a $\frac{1}{2}$-sparse  family.  Let
\begin{eqnarray*}&&H_{0,\,Q_0}(x)=T_{1,b}T_2(f\chi_{9Q_0})\chi_{Q_0\backslash \cup_{j_1}Q_{0}^{j_1}}(x)\\
&&\quad+\sum_{m=1}^{\infty}\sum_{j_1,\dots,j_m}T_{1,b}T_2(f\chi_{9Q_{0}^{j_1\dots j_m}})\chi_{Q_{0}^{j_1\dots j_m}\backslash \cup_{j_{m+1}}Q_{0}^{j_1\dots j_{m+1}}}(x)\\
&&\quad+\sum_{m=1}^{\infty}\sum_{j_1\dots j_m}H_{Q_0}^{j_1\dots j_m}f(x)\chi_{Q_{0}^{j_1\dots j_m}}(x),
\end{eqnarray*}
Also, we define the functions $H_{1,\,Q_0}$ and $H_{2,\,Q_0}$ by
\begin{eqnarray*}H_{1,Q_0}(x)&=&\sum_{m=1}^{\infty}\sum_{j_1\dots j_m}H_{Q_0,1}^{j_1\dots j_m}f(x)\chi_{Q_{0}^{j_1\dots j_m}}(x),
\end{eqnarray*}
and
$$H_{2,Q_0}(x)=\sum_{m=1}^{\infty}\sum_{j_1\dots j_m}H_{Q_0,2}^{j_1\dots j_m}f(x)\chi_{Q_{0}^{j_1\dots j_m}}(x).$$
Then for a. e. $x\in Q_0$,
$$T_{1,b} T_2(f\chi_{9Q_0})(x)=H_{0,Q_0}f(x)+H_{1,Q_0}(x)+H_{2,Q_0}(x).
$$
Moreover, as in the inequalities (\ref{eq3.8})-(\ref{eq3.10}), the process of producing $\{Q_0^{j_1\dots j_m}\}$ tells us that
$$|H_{0,Q_0}f(x)\chi_{Q_0}|\lesssim \sum_{Q\in\mathcal{F}}\|f\|_{L(\log L)^2,\,9Q}\chi_{Q}(x).$$
For any function $g$, we can verify that
$$\Big|\int_{Q_0}g(x)H_{1,Q_0}(x)dx\Big|\lesssim \sum_{Q\in \mathcal{F}}|Q|\|f\|_{L\log L,\,9Q}\|g\|_{L\log L,\,Q},$$
and
$$\Big|\int_{Q_0}g(x)H_{2,Q_0}(x)dx\Big|\lesssim \sum_{Q\in \mathcal{F}}|Q|\langle |f|\rangle_{9Q}\|g\|_{L(\log L)^2,\,Q}.$$

We can now conclude the proof of Theorem \ref{thm3.1}. In fact, as in \cite{ler3}, we decompose $\mathbb{R}^n$ by cubes $R_l$, such that ${\rm supp}f\subset 3R_l$ for each $l$, and $R_l$'s have disjoint interiors.
Then for a. e. $x\in\mathbb{R}^n$,
\begin{eqnarray*}T_{1,\,b} T_2f(x)&=&\sum_{l}H_{0,R_l}f(x)+\sum_lH_{1,R_l}f(x)+\sum_lH_{2, R_l}f(x)\\
&:=&H_0f(x)+H_1f(x)+H_2f(x).\end{eqnarray*}
Obviously, the functions $H_0$, $H_1$ and $H_2$ satisfies conclusion (i). Our desired conclusion then follows directly.
\end{proof}
\begin{remark}By Theorem \ref{thm3.1}, we see that for bounded function $f$ with compact support, there exists a $\frac{1}{2}\frac{1}{9^n}$-sparse family $\mathcal{S}$, such that for any $g$,
$$
\Big|\int_{\mathbb{R}^n}g(x)T_1T_2f(x)dx\Big|\lesssim \mathcal{A}_{\mathcal{S}; L\log L, L^1}(f,\,g)+\mathcal{A}_{\mathcal{S}; L^1, L\log L}(f,\,g),
$$
and
$$
\Big|\int_{\mathbb{R}^n}g(x)T_{1,\,b}T_2f(x)dx\Big|\lesssim \sum_{j=0}^2\mathcal{A}_{\mathcal{S}; L(\log L)^{2-j},L(\log L)^j}(f,\,g).
$$
\end{remark}

\medskip

{\it Proof of Theorem \ref{thm1.1}}. By Theorem \ref{thm3.1} and Lemma \ref{lem3.2}, we know that for each $\epsilon\in (0,\,1)$, weight $u$ and $\lambda>0$,
\begin{eqnarray*}
u\big(\{x\in\mathbb{R}^n:|T_1T_2f(x)|>\lambda\}\big)&\lesssim &\int_{\mathbb{R}^n}\frac{|f(x)|}{\lambda}\log\Big({\rm e}+\frac{|f(x)|}{\lambda}\Big)M_{L\log L}u(x)dx\\
&&+\frac{1}{\lambda\epsilon}\int_{\mathbb{R}^n}|f(x)|M_{L(\log L)^{1+\epsilon}}u(x)dx\\
&\lesssim&\frac{1}{\epsilon}\int_{\mathbb{R}^n}\frac{|f(x)|}{\lambda}\log\big({\rm e}+\frac{|f(x)|}{\lambda}\big)M_{L(\log L)^{1+\epsilon}}u(x)dx.
\end{eqnarray*}
Applying the ideas used in \cite[p. 608]{hp2} (see also the proof of Corollary 1.3 in \cite{ler4}), we know that the last inequality implies (\ref{eq1.8}).

The inequality (\ref{eq1.81})  is essentially an application of Proposition 9 in \cite{bb}. Recall that $T_1(1)=0$. It then follows from \cite[Proposition 9]{bb} that for $f\in L^2(\mathbb{R}^n)$, there exists a sparse family of cubes $\mathcal{S}$, such that
$$|T_1T_2f(x)|\lesssim \sum_{Q\in\mathcal{S}}{\rm osc}_Q(T_2f)\chi_{Q}(x),
$$
here ${\rm osc}_Q(T_2f)$ is defined by
$${\rm osc}_Q(T_2f)=\frac{1}{|Q|}\int_{Q}\big|T_2f(x)-\langle T_2f\rangle_Q\big|dx.$$
A trivial computation leads to that
\begin{eqnarray*}
{\rm osc}_Q(T_2f)\lesssim \|f\|_{L\log L,\,8nQ}+\sum_{k=1}^{\infty}2^{-k\varepsilon}\langle |f|\rangle_{2^kQ},
\end{eqnarray*}with $\varepsilon$ the constant in (\ref{eq1.verepsilon}).
Let $G$ be the operator defined by $$Gf(x)=\sum_{k=1}^{\infty}2^{-k\varepsilon}\sum_{Q\in \mathcal{S}}\langle |f|\rangle_{2^kQ}\chi_{Q}(x).$$
We then have that
\begin{eqnarray}\label{en3.11}|T_1T_2f(x)|\lesssim \mathcal{A}_{\mathcal{S},L\log L}f(x)+Gf(x).
\end{eqnarray}
On the other hand, it was proved in \cite{ler1} that, there exist  sparse family of cubes $\mathcal{S}_1,\,\dots,\,\mathcal{S}_{2^n+1}$
such that for any function $g$,
\begin{eqnarray}\label{en3.12}
\int_{\mathbb{R}^n}|Gf(x)g(x)|dx\lesssim\sum_{j=1}^{2^n+1}\mathcal{A}_{\mathcal{S}_j,L,\,L}(f,\,g).
\end{eqnarray}
Thus, by  Lemma \ref{lem3.2}, we know that for each fixed $\lambda>0$, $\epsilon\in (0,\,1)$ and weight $u$,
\begin{eqnarray*}
&&u\big(\{x\in\mathbb{R}^n:\, |T_{1} T_2f(x)|>\lambda\}\big)\\
&&\quad\lesssim \frac{1}{\epsilon^2}\int_{\mathbb{R}^n}\frac{|f(x)|}{\lambda}\log\Big({\rm e}+\frac{|f(x)|}{\lambda}\Big)M_{L(\log L)^{\epsilon}}u(x)dx.\nonumber
\end{eqnarray*}
This implies (\ref{eq1.81}). \qed

\begin{remark}\label{r3.2} By the estimate of bilinear sparse operator (see \cite{hhl} or \cite{li2}) , we know that for $p\in (1,\,\infty)$ and $w\in A_p(\mathbb{R}^n)$,
\begin{eqnarray}\label{en3.13}\mathcal{A}_{\mathcal{S}, L^1,\,L^1}(f,\,g)\lesssim [w]_{A_p}^{\frac{1}{p}} \big([w]_{A_{\infty}}^{\frac{1}{p'}}+[\sigma]_{A_{\infty}}^{\frac{1}{p}}\big)\|f\|_{L^p(\mathbb{R}^n,\,w)}\|g\|_{L^{p'}(\mathbb{R}^n,\,\sigma)}.
\end{eqnarray}
The conclusions in Theorem \ref{thm1.0} now follows from (\ref{en3.11})-(\ref{en3.13}) and   inequality (\ref{en3.2}). Moreover, by (\ref{en3.11})-(\ref{en3.12}), (\ref{eq3.12new}) and (\ref{eq3.1}), we know that for $p\in (1,\infty)$, $\epsilon\in (0,\,1)$ and weight $u$,
$$\|T_1T_2f\|_{L^p(\mathbb{R}^n,\,u)}\lesssim p'^{2}p^2\big(\frac{1}{\epsilon}\big)^{\frac{1}{p'}} \|f\|_{L^p(\mathbb{R}^n,\,M_{L(\log L)^{p-1+\epsilon}}u)}.$$
\end{remark}
\begin{remark}\label{r3.3} Let $T_1,\,\dots\,T_m$ be Calder\'on-Zygmund operators. Repeating the proof of Theorem \ref{thm1.1}, we can verify that for each bounded function $f$, there exists a $\frac{1}{2}\frac{1}{3^m}$-sparse family of cubes $\mathcal{S}$, and  functions $J_0,\,\dots,\,J_{m_1}$ such that for each $j=0,\,\dots, m-1$, and function $g\in L^1(\mathbb{R}^n)$,
$$\Big|\int_{\mathbb{R}^n}J_j(x)g(x)dx\Big|\lesssim \mathcal{A}_{\mathcal{S},\,L(\log L)^{m-j},\,L(\log L)^j}(f,\,g),$$
and for a. e. $x\in \mathbb{R}^n$,
$$T_1\dots T_mf(x)=\sum_{j=0}^{m-1}J_j(x).$$
Moreover, for each $\epsilon\in (0,\,1)$, weight $u$ and $\lambda>0$,
\begin{eqnarray*}
&&u\big(\{x\in\mathbb{R}^n:|T_1\dots T_mf(x)|>\lambda\}\big)\\
&&\quad\lesssim\frac{1}{\epsilon}\int_{\mathbb{R}^n}\frac{|f(x)|}{\lambda}\log^{m-1}\Big({\rm e}+\frac{|f(x)|}{\lambda}\Big)M_{L(\log L)^{m-1+\epsilon}}u(x)dx.
\end{eqnarray*}
\end{remark}

{\it Proof  of Theorem \ref{thm1.2}}. As it was shown in the proof of Theorem \ref{thm1.1}, by Theorem \ref{thm3.1} and Lemma \ref{lem3.2}, we know that for each $\epsilon\in (0,\,1)$, weight $u$ and $\lambda>0$,
\begin{eqnarray*}
&&u\big(\{x\in\mathbb{R}^n:|T_{1,\,b}T_2f(x)|>\lambda\}\big)\\
&&\quad\lesssim\frac{1}{\epsilon}\int_{\mathbb{R}^n}\frac{|f(x)|}{\lambda}\log^2\big({\rm e}+\frac{|f(x)|}{\lambda}\big)M_{L(\log L)^{2+\epsilon}}u(x)dx.
\end{eqnarray*}
The conclusion then follows immediately.\qed

\end{document}